\documentclass[reqno,centertags,12pt]{amsart}
\usepackage{amsmath,amsthm,amscd,amssymb,latexsym,verbatim}

\usepackage{graphicx,epsf,cite}

\textheight 21cm \topmargin 0cm \leftmargin 0cm \marginparwidth 0mm
\textwidth 16.6cm \hsize \textwidth \advance \hsize by
-\marginparwidth \oddsidemargin -4mm \evensidemargin \oddsidemargin

%%%%%%%%%%%%% fonts/sets %%%%%%%%%%%%%%%%%%%%%%%

\newtheorem{theorem}{Theorem}[section]

\newtheorem{lemma}[theorem]{Lemma}

\theoremstyle{definition}

\theoremstyle{remark}

%%%%%%%%%%%%%%  Rowan's unspaced list %%%%%%%%%%%%%%%%

\newcounter{smalllist}

%%%%%%%%%%%%%%% operators %%%%%%%%%%%%%%%%%%%%%%

\DeclareMathOperator*{\dist}{dist}

\DeclareMathOperator*{\sgn}{sgn} 
\DeclareMathOperator*{\ches}{ches}
\DeclareMathOperator*{\ch}{ch}
\allowdisplaybreaks
\numberwithin{equation}{section}

% Absolute value notation

%\renewcommand{\qedsymbol}{}

%%%%%%%%%%%%%%%%%%  abbreviations %%%%%%%%%%%%%%%%

\newcommand{\lb}{\label}

\newcommand{\supp}{\text{\rm{supp}}}

\newcommand{\beq}{\begin{equation}}
\newcommand{\eeq}{\end{equation}}

\newcommand{\bal}{\begin{align}}
\newcommand{\eal}{\end{align}}
\newcommand{\bals}{\begin{align*}}
\newcommand{\eals}{\end{align*}}

%%%%%%%%%%%%%% fonts/sets %%%%%%%%%%%%%%%%%%%%%%%

\newcommand{\bbR}{{\mathbb{R}}}

\newcommand{\bbP}{{\mathbb{P}}}
\newcommand{\bbE}{{\mathbb{E}}}

\newcommand{\bbT}{{\mathbb{T}}}

\newcommand{\calL}{{\mathcal L}}
\newcommand{\calC}{{\mathcal C}}

\newcommand{\eps}{\varepsilon}

\newcommand{\tht}{\theta}

\newcommand{\laa}{\lambda}

\newcommand{\til}{\tilde}

\newcommand{\aaa }{a}

%%%%%%%%%%%%%%%%%%%%%%%%%%%%%%%%%%%%%%%%%%%%%%
%%%%%%%%%%%%%%%%%%%% end of  definitions %%%%%%%%%%%%%%%%
%%%%%%%%%%%%%%%%%%%%%%%%%%%%%%%%%%%%%%%%%%%%%%

\begin{document}
\title[KPP Transition Fronts]
{Transition Fronts in Inhomogeneous Fisher-KPP Reaction-Diffusion Equations}

\author{Andrej Zlato\v s}

\address{\noindent Department of Mathematics \\ University of
Wisconsin \\ Madison, WI 53706, USA \newline Email: \tt
zlatos@math.wisc.edu}

%\thanks{The author acknowledges partial support by the NSF through the grant DMS-0632442}

\begin{abstract}
We use a new method in the study of Fisher-KPP reaction-diffusion equations to prove existence of transition fronts for inhomogeneous KPP-type non-linearities in one spatial dimension.  We also obtain new estimates on entire solutions of some KPP reaction-diffusion equations in several spatial dimensions.  Our method is based on the construction of sub- and super-solutions to the non-linear PDE from solutions of its linearization at zero.
\end{abstract}

\maketitle

%%%%%%%%%%%%%%%%%%%%%%%%%%%%%%%%%%%%%%%%%%%%%%
\section{Introduction and Main Results} \lb{S1}
%%%%%%%%%%%%%%%%%%%%%%%%%%%%%%%%%%%%%%%%%%%%%%

We introduce a new elementary method for the study of certain solutions to reaction-diffusion equations with  {\it Kolmogorov-Petrovskii-Piskunov (KPP) type} non-linearities.  We use it to prove existence of transition front solutions for very general spatially inhomogeneous KPP reaction-diffusion equations in one dimension as well as some special ones in several dimensions, and to obtain very good estimates on these solutions.  Our method is based on relating the solutions of the original non-linear equation to those of its linearization at $u=0$.

Let us first consider the reaction-diffusion equation
\beq \lb{1.1}
u_t=u_{xx} + f(x,u)
\eeq
with $x\in \bbR$ and $f$ an {\it inhomogeneous KPP reaction function}.
% satisfying certain bounds.  
That is, we assume that $f$ is Lipschitz, $a(x)\equiv f_u(x,0)>0$ exists, 
\beq \lb{1.2}
f(x,0)=f(x,1)=0 \qquad\text{and} \qquad  a(x)g(u)\le  f(x,u)\le a(x)u \quad \text{for $(x,u)\in\bbR\times[0,1]$,}
\eeq
where $g\in C^1([0,1])$ is such that 
\beq \lb{1.3}
g(0)=g(1)=0,  \qquad g'(0)=1,  \qquad  \text{and} \qquad 0<g(u)\le u \quad \text{for $u\in(0,1)$.}
\eeq
We will also assume
\beq \lb{1.3a}
\int_0^1 \frac{u-g(u)}{u^2} du<\infty \qquad\text{and}\qquad g'(u)\le 1 \quad \text{for $u\in(0,1)$.}
\eeq
%We let $a(x)\equiv f_u(x,0)>0$ and 
We define $a_-\equiv \inf_{x\in\bbR} a(x)\ge 0$  and also assume existence of $a_+<\infty$ such that
\beq \lb{1.4}
a(x)\le a_+ \qquad \text{for  $x\in\bbR$}.
\eeq
%as well as of $C\ge 1$ and $\delta>0$ such that
%\beq \lb{1.5}
%f(x,u)\ge a(x)(u - Cu^{1+\delta}) \qquad \text{for $(x,u)\in\bbR\times(0,1)$}.
%\eeq
%Notice that \eqref{1.2} yields $a_-\equiv \inf_{x\in\bbR} a(x) \ge 2 g(\tfrac 12)>0$.  
%We will also assume that $g'(0)>0$, which can be done without loss because of \eqref{1.5} and $a_- >0$.

A (right-moving)  
{\it transition front} for \eqref{1.1} is an {\it entire} (global-in-time) solution $0\le u\le 1$ connecting 0 and 1 in the sense of
\beq \lb{1.6}
\lim_{x\to-\infty}u(t,x)=1 \qquad\text{and}\qquad \lim_{x\to+\infty}u(t,x)=0
\eeq
for each $t\in\bbR$.  It models an {\it invasion} of the unstable state $u\equiv 0$ by the asymptotically stable state $u\equiv 1$.  Moreover, we also require that for any $\eps>0$ there is $L_\eps<\infty$ such that 
\beq \lb{1.7}
 \sup_{t\in\bbR} {\rm diam}\left\{x\in\bbR \,|\,\eps\le u(t,x)\le 1-\eps\right\} \le L_\eps,
\eeq
that is, the width of the transition region between $\eps$ and $1-\eps$ is uniformly bounded in time.  This definition of transition fronts has first appeared in \cite{BH2,Matano}.

It has been well known since the seminal works of Fisher \cite{Fisher} and Kolmogorov-Petrovskii-Piskunov \cite{KPP} that in the homogeneous case $f(x,u)=f(u)$, there exist transition fronts where constant-in-time speed and profile.  More specifically, \eqref{1.1} has solutions of the form $u(t,x)=U(x-ct)$ with $U(-\infty)=1$ and $U(\infty)=0$ precisely when the front speed $c\ge c^*_f$, with $c^*_f\equiv 2\sqrt{f'(0)}$ is the {\it minimal front speed}.  These fronts have a constant-in-time profile $U$ with $U'<0$, are unique for each $c$ up to a translation, and are usually called {\it traveling fronts}.  There are also other transition fronts in this case \cite{HN1}, which are obtained as a combination of two or more traveling fronts with different speeds (we will discuss this in more detail below).  Later, existence of KPP transition fronts with time-periodic profiles (called {\it pulsating fronts}) was proved for $x$-periodic reactions $f$, again for all speeds $c\ge c^*_f$ with some $c^*_f>0$ \cite{BH}.

Very recently, existence of transition fronts was first time proved for some non-periodic inhomogeneous KPP reactions \cite{NRRZ}  (see  \cite{MNRR,MRS,NolRyz,ZlaGenfronts} for results on ignition reactions, and  \cite{ZlaGenfronts} for results on some non-KPP non-negative reactions). Specifically, if $a_->0$ and $a(x)-a_-$ is {\it compactly supported}, then transition fronts exist when  $ \lambda_0\equiv \sup \sigma[\partial^2_{xx}+a(x)]$, the supremum of the spectrum of the operator $\partial^2_{xx}+a(x)$, satisfies $ \lambda_0<2a_-$ (note that always $\lambda_0\ge a_-$).  These fronts do not have a constant profile but for each $c\in(2\sqrt{a_-}, \lambda_0 (\lambda_0 -a_-)^{-1/2})$ there is a front which has a {\it mean speed} 
\beq \lb{1.7a}
\lim_{|t-s|\to\infty} \frac{X(t)-X(s)}{t-s}
\eeq
equal to $c$, where $X(t)$ is the rightmost point such that $u(t,X(t))=\tfrac 12$.  Moreover, no transition fronts exist when, in addition, $a(x)\ge a_-$ and $ \lambda_0>2a_-$ \cite{NRRZ}; this is the first {\it non-existence-of-fronts} result.

We consider here the question of existence of transition fronts in general inhomogeneous media without the assumption of compact support of $a(x)-a_-$ (in which case no constant or mean speed fronts exist in general) and answer it in the affirmative again when $ \lambda_0<2a_-$.  We achieve this by using a new and elementary method which exploits the close connection between the equation \eqref{1.1} and its linearization 
\beq \lb{1.8}
v_t=v_{xx} + a(x)v
\eeq
at $u=0$.  

Such a connection is well known,  in particular, when $f(x,u)=f(u)$ and so $a(x)\equiv a=f'(0)$ is constant.  Then \eqref{1.8} has traveling-front-like solutions $e^{-\gamma(x-c_{a,\gamma}t)}$ with $\gamma>0$ and speed $c_{a,\gamma}\equiv \gamma+a\gamma^{-1}\ge 2 \sqrt a=c_f^*$.
%, with $\gamma_{c,\pm}\equiv \tfrac 12(c\pm\sqrt{c^2-4a})$. 
It turns out \cite{Uchi} that if $c>2 \sqrt a$ and $\gamma<\sqrt a$ is such that $c=c_{a,\gamma}$, then the traveling front for \eqref{1.1} with speed $c$ also has asymptotic decay $e^{-\gamma(x-c_{a,\gamma}t)}$ as $x\to\infty$, while for $c=2 \sqrt a$, the asymptotic decay is $(x-2\sqrt a\, t)e^{-\sqrt a(x-2\sqrt a\, t)}$ as $x\to\infty$ (fronts for \eqref{1.8} with $\gamma>\sqrt a$ do not give rise to fronts for \eqref{1.1}).  
%This picture extends also to the $x$-periodic $f$ case.
This means that if $U_{f,\gamma}$ is a traveling front profile for \eqref{1.1} corresponding to speed $c_{a,\gamma} \ge c_f^*$ with $\gamma\le \sqrt a$, and the function $h:[0,\infty)\to[0,1)$ is given by $U_{f,\gamma}(x)=h(e^{-\gamma x})$ (so that $h(0)=0$ and $\lim_{v\to\infty} h(v)=1$), then $h'(0)=1$ when $\gamma< \sqrt a$ and $\lim_{v\to 0}h(v)(-v\ln v)^{-1}=1$ when $\gamma= \sqrt a$, after an appropriate translation of $U_{f,\gamma}$ in $x$.

The above shows that for $f(x,u)=f(u)$ and for faster-than-minimal speed $c>c^*_f$, the ``tails'' of the corresponding traveling fronts for \eqref{1.1} and  \eqref{1.8} are asymptotically the same.  We will show that this still holds for some transition fronts in general inhomogeneous media when $ \lambda_0<2a_-$.  We will in fact show that {\it the study of these fronts for \eqref{1.1} is essentially equivalent to the study of the corresponding front-like solutions for the simpler equation \eqref{1.8}}.  

Similarly to the compactly supported $a(x)-a_-$ setting in \cite{NRRZ}, examples of the latter can be found in the form $v_{\laa}(t,x)\equiv e^{\laa t}\phi_\laa(x)$, where $\phi_\laa(x)>0$ is a %(up to a multiple) 
solution of the Schr\" odinger generalized eigenfunction equation
\[%\beq \lb{1.8a}
\phi_\laa''+a(x)\phi_\laa=\laa\phi_\laa,
\]
with $\lim_{x\to\infty} \phi_\laa(x)=0$ and $\phi_\laa(0)=1$.  Notice that if $a$ is constant, then $v_\laa(t,x)=e^{\laa t-\sqrt{\laa-a}\,x}=e^{-\gamma(x-c_{a,\gamma}t)}$ with $\gamma\equiv \sqrt{\laa-a}$.

Sturm oscillation theory shows that such $\phi_\lambda>0$ exists and is unique precisely when $\laa> \lambda_0$.  Moreover, $\phi_\lambda$ grows exponentially as $x\to -\infty$ (see \eqref{2.8}).  Then $v_{\laa}$ is a super-solution of \eqref{1.1} and we will show that for any $\laa\in(\lambda_0,2a_-)$ there is $h:[0,\infty)\to[0,1)$ such that $w_{\laa}(t,x)\equiv h(v_{\laa}(t,x))$ is a sub-solution (rather than an outright solution, as in the homogeneous case).  Moreover, $\lambda<2a_-$ will ensure $h(v)\le v$  so it will follow that there exists a transition front $u\in[w_{\laa}, v_{\laa}]$ for \eqref{1.1}.  We note that this construction cannot be expected to work for  $\lambda\ge 2a_-$ in general because in the homogeneous case this translates to $\gamma\ge\sqrt a$, which either gives rise to no front for \eqref{1.1} when $\gamma>\sqrt a$ or violates $h(v)\le v$ when $\gamma = \sqrt a$.

%We also note that such a transition front does not exist in general when $\lambda\ge 2a_-$, as can be seen from the homogeneous case $f(x,u)=f(u)$ where $\lambda>2a$ becomes $\gamma>\sqrt a$ (with $\gamma\equiv \sqrt{\laa-a}$).

There is, in fact, a larger class of positive entire solutions of \eqref{1.8}, of which the $v_\lambda$ are the extremal points.  Indeed, if $\mu$ is a finite non-negative non-zero Borel measure on $(\lambda_0,\infty)$ with a bounded support, then Harnack inequality shows that
\beq \lb{1.8b}
v_\mu(t,x)\equiv \int_\bbR  v_{\laa}(t,x) d\mu(\laa) = \int_\bbR e^{\laa t} \phi_{\laa}(x) d\mu(\laa)
\eeq
is well-defined, and it is obiously an entire solution of \eqref{1.8}.  We will show that $v_\mu$ also gives rise to an entire solution of \eqref{1.1} provided $ \sup \supp(\mu)<2a_-$.
%, in addition to $\mu_0\equiv \sup \supp(\mu)\ge \laa_0$.  

Finally, our result extends to and will be stated for the more general PDEs
\beq \lb{1.9}
u_t=(B(x)u_{x})_{x} + q(x) u_x + f(x,u)
\eeq
and
\beq \lb{1.10}
v_t=(B(x)v_{x})_{x} +q(x) v_x + a(x)v
\eeq
with $B,q$ Lipschitz and satisfying
\beq \lb{1.11}
0<B_-\le B(x)\le B_+<\infty \qquad \text{and} \qquad  |q(x)|\le q_+<\infty \quad \text{\,\,for  $x\in\bbR$}.
\eeq
%We note that this is the first time existence of transition fronts has been proved for a model with a {\it non-divergence-free} first-order coefficient $q$ (i.e., non-constant in 1 spatial dimension).
Let us define
\beq \lb{1.11a}
\lambda_0\equiv \sup_{\psi \in H^1(\bbR)} \frac{\int_\bbR [ - B(x)\psi'(x)^2 + q(x)\psi'(x)\psi(x) + a(x)\psi(x)^2] dx}{\int_\bbR \psi(x)^2 dx} \quad(\ge a_-).
\eeq
%with the supremum over all compactly supported  $\psi\in H^1(\bbR)$.  
Note that when $q\equiv 0$, then the Rayleigh quotient formula for self-adjoint operators gives 
\[
\lambda_0 = \sup \sigma \left[ \partial_x(B(x)\partial_{x})+a(x) \right] .
\]
 As we show below, for $\laa> \lambda_0$ there is again a unique $\phi_\laa>0$ such that 
\beq \lb{1.12}
(B(x)\phi_\laa')' + q(x)\phi_\laa' +a(x)\phi_\laa=\laa\phi_\laa,
\eeq
$\lim_{x\to\infty} \phi_\laa(x)=0$ and $\phi_\laa(0)=1$.

\begin{theorem} \lb{T.1.1}
Assume \eqref{1.2}--\eqref{1.4} and \eqref{1.11}, let $ \lambda_0$ %\equiv \sup \sigma(\partial_x(B(x)\partial_{x})+a(x))$, 
be as in \eqref{1.11a} and for $\laa> \lambda_0$ let $\phi_\laa$ be as in \eqref{1.12}.
% and $v_{\laa}(t,x)\equiv e^{\laa t} \phi_\laa(x)$.  
Let $(aB)_-\equiv  \inf_{x\in\bbR} [a(x)B(x)]$, and assume also that $q_+\le 2\sqrt{(aB)_-}$ and
\beq \lb{1.13}
 \lambda_0< \lambda_1\equiv \inf_{x\in\bbR} \left\{ a(x) +  \sqrt{(aB)_-} \left[ \sqrt{(aB)_-} - |q(x)| \right] B(x)^{-1} \right\}. %\inf_y \big[B(y)a(y)\big] \right]
\eeq
Let $\mu$ be a finite non-negative non-zero Borel measure on $(\lambda_0,\lambda_1)$ with $\mu_0\equiv \inf \supp(\mu)$ and $\mu_1\equiv \sup \supp(\mu)$, and define $v_\mu$ as in \eqref{1.8b}.

(i) If $\mu_1<\lambda_1$, then there is an increasing function $h:[0,\infty)\to[0,1)$ with $h(0)=0$, $h'(0)=1$, $\lim_{v\to\infty} h(v)=1$, and an entire solution $u_\mu$  of \eqref{1.9} satisfying \eqref{1.6}, $(u_\mu)_t>0$, %and 
\beq \lb{1.14}
h \left( v_\mu \right) \le u_\mu \le \min \left\{ v_\mu,1 \right\}.
\eeq
In fact, we can  choose $h= h_{g,\alpha}$ from \eqref{2.1} below, with any $\alpha\in ( 1-(\lambda_1-\mu_1)a_+^{-1},1)$.

%$g,a_+$ and $\zeta$, provided $\laa_1-\mu_1\ge\zeta>0$.
% and a positive lower bound on $\laa_1-\mu_1$.
%but  can be chosen uniformly  for all $(\lambda_0,\lambda,\lambda_1)$ in any compact subset of $\{(x,y,z)\,|\, 0\le x<y\le z \}$.

(ii) If $\lambda_0<\mu_0\le \mu_1<\lambda_1$, then $u_\mu$ from (i) is a transition front (i.e., satisfying also \eqref{1.7}), with $L_\eps$ depending only on $g,a_+,B_\pm,\eps$ and $\zeta$, provided $\min\{\mu_0-\laa_0, \laa_1-\mu_1\} \ge\zeta>0$.
\end{theorem}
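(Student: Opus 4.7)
The plan is to bracket $u_\mu$ between a sub-solution $w:=h(v_\mu)$ and a super-solution $\bar u:=\min\{v_\mu,1\}$ of \eqref{1.9}, and then extract $u_\mu$ as a monotone-in-$t$ limit of Cauchy problems started at successively earlier times. The super-solution property of $\bar u$ is immediate: $v_\mu$ solves the linear equation \eqref{1.10}, the inequality $f(x,u)\le a(x)u$ makes it a super-solution of \eqref{1.9} wherever $v_\mu\le 1$, and the constant $1$ is a solution by $f(x,1)=0$; the minimum of a super-solution and a solution is then a super-solution in the viscosity sense.

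The sub-solution property for $w=h(v_\mu)$ with $h=h_{g,\alpha}$ from \eqref{2.1} is where the hypotheses of the theorem are used. The chain rule together with \eqref{1.10} reduces $w_t-(Bw_x)_x-qw_x-f(x,w)\le 0$ to the pointwise inequality
\[
a(x)\bigl[h'(v_\mu)\,v_\mu-g(h(v_\mu))\bigr]\le B(x)\,h''(v_\mu)\,(v_\mu)_x^2
\]
after invoking $f\ge a\,g$. The logarithmic derivative $\gamma_\lambda:=-\phi_\lambda'/\phi_\lambda>0$ satisfies the Riccati identity
\[
B(x)\gamma_\lambda^2-B(x)\gamma_\lambda'-\bigl[B'(x)+q(x)\bigr]\gamma_\lambda=\lambda-a(x)
\]
derived from \eqref{1.12}, and the conditions $q_+\le 2\sqrt{(aB)_-}$ together with the definition \eqref{1.13} of $\lambda_1$ produce a pointwise lower bound on $\gamma_\lambda$ for every $\lambda\in\supp\mu$. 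Writing $(v_\mu)_x/v_\mu$ as a $\phi_\lambda e^{\lambda t}$-weighted average of $-\gamma_\lambda$ transfers this bound to $v_\mu$; the constants in $h_{g,\alpha}$ are then tuned via $\alpha\in(1-(\lambda_1-\mu_1)/a_+,1)$ so that the displayed pointwise inequality holds throughout $\bbR\times\bbR$.

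Given $w\le\bar u$ (from $h'(0)=1$, $h<1$, and $h(v)\le v$ built into $h_{g,\alpha}$), I construct $u_\mu$ as follows. For each $n\in\bbN$ solve \eqref{1.9} on $[-n,\infty)\times\bbR$ with initial datum $w(-n,\cdot)$. Parabolic comparison traps the solution $u^{(n)}$ in $[w,\bar u]$ and, because $w$ is a sub-solution, makes it nondecreasing in $t$; comparing $u^{(n)}$ with the time-translate of $u^{(n+1)}$ makes the sequence monotone in $n$ at equal times, and standard parabolic estimates deliver an entire limit $u_\mu\in[w,\bar u]$ solving \eqref{1.9} with $(u_\mu)_t\ge 0$. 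The strong maximum principle applied to the linear parabolic equation satisfied by $(u_\mu)_t$ promotes this to $(u_\mu)_t>0$. The limits \eqref{1.6} follow from $v_\mu(t,x)\to 0$ at $+\infty$ and $v_\mu(t,x)\to\infty$ at $-\infty$ (inherited from each $\phi_\lambda$) combined with $\lim_{v\to\infty}h(v)=1$.

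Part (ii) requires a uniform-in-$t$ bound on the width of $\{x:\varepsilon\le v_\mu(t,x)\le h^{-1}(1-\varepsilon)\}$, since $u_\mu\in[h(v_\mu),\min\{v_\mu,1\}]$ forces $\{x:\varepsilon\le u_\mu\le 1-\varepsilon\}$ into that set. Under $\mu_0\ge\lambda_0+\zeta$ and $\mu_1\le\lambda_1-\zeta$ the Riccati analysis upgrades to uniform two-sided bounds $0<c_1(\zeta)\le\gamma_\lambda(x)\le c_2(\zeta)$ for every $x\in\bbR$ and every $\lambda\in\supp\mu$; integrating against $\mu$ and applying Harnack yields exponential decay of $v_\mu(t,\cdot)$ at rates squeezed between $c_1$ and $c_2$, from which \eqref{1.7} follows with $L_\varepsilon$ depending only on $g$, $a_+$, $B_\pm$, $\varepsilon$, and $\zeta$. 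The main technical obstacle is the sub-solution step: producing a pointwise lower bound on $(v_\mu)_x^2/v_\mu^2$ strong enough to absorb the nonlinear defect $h'(v)v-g(h(v))$, and doing so for the superposition $v_\mu$ rather than a single $\phi_\lambda$, is where the full force of \eqref{1.13} and the strict inequality $\mu_1<\lambda_1$ is needed.
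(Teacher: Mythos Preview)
Your overall architecture (sub/super-solution bracketing, limit of Cauchy problems started at time $-n$, strong maximum principle for $(u_\mu)_t$) matches the paper.  But the heart of the argument --- the estimate that makes $w=h_{g,\alpha}(v_\mu)$ a sub-solution --- is stated with the wrong sign, and this propagates into part (ii).

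In your displayed inequality
\[
a(x)\bigl[h'(v_\mu)\,v_\mu-g(h(v_\mu))\bigr]\le B(x)\,h''(v_\mu)\,(v_\mu)_x^2,
\]
the crucial fact you never invoke is that $h_{g,\alpha}''<0$ (this is Lemma~\ref{L.A.1} in the paper, proved via the phase-plane for the traveling-front ODE; without the hypothesis $g'\le 1$ it can fail).  Because of this sign, and because the ODE \eqref{2.5d} gives $h'(v)v-g(h(v))=\alpha v^2 h''(v)<0$, your inequality is equivalent to the \emph{upper} bound $B(x)(v_\mu)_x^2\le \alpha\, a(x)\, v_\mu^2$, which is exactly \eqref{2.3}.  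So what you need from the Riccati equation for $\psi=B\phi_\lambda'/\phi_\lambda$ is $|\psi|\le\sqrt{\alpha(aB)_-}$ --- an upper bound, not a lower one.  The paper gets this by a blow-up argument: if $|\psi(x_0)|\ge\sqrt{\alpha(aB)_-}$, then the definition of $\lambda_1$ forces $\psi'\le -m<0$ (or $\ge m>0$) and $\psi$ blows up in finite distance.  An upper bound on each $|\phi_\lambda'/\phi_\lambda|$ transfers to $v_\mu$ directly since $\phi_\lambda>0$; a lower bound would not survive averaging.  Your sentences ``produce a pointwise lower bound on $\gamma_\lambda$'' and ``producing a pointwise lower bound on $(v_\mu)_x^2/v_\mu^2$'' have the direction reversed, and your unproved assertion $\gamma_\lambda=-\phi_\lambda'/\phi_\lambda>0$ is not true in general (nothing prevents $\phi_\lambda'$ from vanishing or changing sign).

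The same confusion undermines your plan for (ii).  You claim the Riccati analysis yields $0<c_1(\zeta)\le\gamma_\lambda(x)\le c_2(\zeta)$, but the Riccati equation $\psi'=\lambda-a-\psi(\psi+q)B^{-1}$ has $\psi'=\lambda-a(x)$ at a zero of $\psi$, which can be of either sign; there is no positive lower bound on $|\gamma_\lambda|$.  The paper instead proves the integral doubling estimate \eqref{2.8} by plugging a truncated $\phi_\lambda$ into the variational definition \eqref{1.11a} of $\lambda_0$ to get $\int_c^d\phi_\lambda^2\le M(\phi_\lambda(c)^2+\phi_\lambda(d)^2)$, and then combining with Harnack.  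That is where the gap $\mu_0-\lambda_0\ge\zeta$ enters (through $M\sim(\lambda-\lambda_0)^{-1}$), while your Riccati route never uses $\mu_0>\lambda_0$ at all.
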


{\it Remarks.} 1.  Condition \eqref{1.13} is sharp in this generality, as exhibited by the previously mentioned  non-existence of transition fronts in the case of $B\equiv 1$, $q\equiv 0$, and compactly supported $a(x)-a_-$ with $a(x)\ge a_->0$ and $\lambda_0>2a_-$ \cite{NRRZ}.
\smallskip

2.  The properties of $h$ give $\lim_{x\to\infty} u_\mu(t,x) v_\mu(t,x)^{-1}=1$ for each $t\in\bbR$.  
%The proof shows, in fact, that we can choose $h= h_{g,\alpha}$, the explicit function defined in \eqref{2.1} below, with any $\alpha\in ( 1-(\lambda_1-\mu_1)a_+^{-1},1)$.
%(e.g., $\alpha=\mu_1/\lambda_1$ works when $q\equiv 0$).
%Since $h_\lambda(0)=0$ and $h_\lambda'(0)=1$,  $u_\lambda$ is between two multiples of $\til v_\lambda(t,x)\equiv \min\{e^{\laa t} \phi_\laa(x),1\}$, and $\lim_{x\to\infty} u_\lambda(t,x)/\til v_\lambda(t,x)=1$. 
\smallskip

%2.  In fact, one can take $h_\lambda=h_{C,\delta,\alpha}$ from \eqref{2.1} below, which depends only on $C$ and $\delta$ from \eqref{1.5} and on $\alpha\equiv \laa/\lambda_1<1$.
%\smallskip

3. Note that  $a_- +   \sqrt{(aB)_-} [ \sqrt{(aB)_-} - q_+ ] B_+^{-1} \le \laa_1 \le 2a_-$, so \eqref{1.13} is satisfied when $ \lambda_0< a_- +   \sqrt{(aB)_-} [ \sqrt{(aB)_-} - q_+ ] B_+^{-1}$. %(1+B_+^{-1} B_-)a_-$. 
In the case $B\equiv 1$ and $q\equiv 0$ we have $\lambda_1=2a_-$, so \eqref{1.13} simplifies to $ \lambda_0<2a_-$, the condition mentioned above. 
\smallskip

4. Of course, an identical result holds for solutions moving to the left, with $\psi_\laa$ defined as $\phi_\laa$ but satisfying instead $\lim_{x\to -\infty} \psi_\laa(x)=0$.  In addition, a combination of two solutions of \eqref{1.10} from (i), moving in opposite directions, gives an entire solution of \eqref{1.9} whose spatial infimum converges to 1 as $t\to\infty$.
\smallskip

5. The borderline case $\mu=\delta_{\lambda_1}$, which corresponds to the traveling front with the minimal speed $c_f^*$ and maximal decay $\sim e^{-\sqrt{f'(0)}x}$ when $f(x,u)=f(u)$, is not covered by our result (because then $\alpha=1$ in Lemma \ref{L.2.1} below).  It is an open question whether  a {\it maximal decay transition front} exists in the inhomogeneous setting.
\smallskip

6.  The nonlinearity $f$ can in addition depend on time, as long as $f_u(t,x,0)$ is time independent.  This is also the case for the other results in this paper.
\smallskip

7. Finally, we note that all our results continue to hold if in \eqref{1.2} one does not necessarily require $f(x,1)=0$.  In that case we drop the lower bound on $f$ in \eqref{1.2} for $u>1$, consider solutions $u\ge 0$ (rather than $0\le u\le 1$) not necessarily converging to 1 as $x\to-\infty$, and the upper bound in \eqref{1.14} becomes just $u_\mu(t,x) \le  v_\mu(t,x)$.
\smallskip

Although the ``extremal'' fronts $v_{\delta_\laa}(=v_\laa)$ have a constant speed in homogeneous media, one cannot expect them to have a constant or even a mean speed in general.  However, if the medium is random and stationary ergodic, they do have (almost surely) a deterministic {\it aymptotic speed}
\beq \lb{1.15}
c\equiv \lim_{|t|\to\infty} \frac{X(t)}{t} >0.
\eeq
with $X(t)$ as in \eqref{1.7a}.

\begin{theorem} \lb{T.1.2}
Consider a probability space $(\Omega,\mathcal{F},\bbP)$ and assume that a measurable  function $p\equiv (a,B,q):\Omega\to L^\infty_{\rm loc}(\bbR)^3$ is Lipschitz in $x$ and satisfies \eqref{1.4} and \eqref{1.11}, uniformly in $\omega\in\Omega$. In addition, assume that $p$  is  stationary ergodic. That is,  there is a group  $\{\pi_y\}_{y\in \bbR}$ of measure preserving transformations acting ergodically on $\Omega$ such that  $p({\pi_y\omega};x)=p(\omega;x+y)$.  Then $\lambda_0,\lambda_1$ from Theorem \ref{T.1.1} are constant in $\omega$, except on a measure zero set. If $\lambda_0<\lambda_1$ and a reaction $f(\omega;x,u)$ satisfies \eqref{1.2}--\eqref{1.3a} for almost all $\omega\in\Omega$, then for each $\lambda\in(\lambda_0,\lambda_1)$ there is $c_\lambda>0$ such that the transition front $u_{\delta_\lambda}({\omega};t,x)$  from Theorem~\ref{T.1.1}(ii) has asymptotic speed $c_\lambda$ in the sense of \eqref{1.15} for almost all $\omega\in\Omega$.
\end{theorem}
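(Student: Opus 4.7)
The plan is to establish (a) a.s.\ constancy of $\laa_0,\laa_1$ (and of the ancillary constants $a_\pm$, $q_+$, $(aB)_-$) via ergodicity, and (b) a deterministic Lyapunov exponent $\gamma_\laa>0$ for the decaying solution $\phi_\laa$ of \eqref{1.12}; the sandwich from Theorem~\ref{T.1.1}(i) will then immediately identify $\laa/\gamma_\laa$ as the asymptotic speed of $u_{\delta_\laa}$. For (a): each of $\laa_0,\laa_1,a_\pm,q_+,(aB)_-$ is either an $x$-supremum, an $x$-infimum, or a Rayleigh supremum over $\psi\in H^1(\bbR)$ of a functional built from $p(\omega;\cdot)$, and is therefore $\pi_y$-invariant thanks to $p(\pi_y\omega;x)=p(\omega;x+y)$, hence a.s.\ constant by ergodicity. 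I will work on the full-measure event on which all these invariants agree with their deterministic values.

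For (b), I would exploit the cocycle structure of $\phi_\laa$. Uniqueness of the positive solution of \eqref{1.12} normalized by $\phi_\laa(0)=1$ and $\phi_\laa(+\infty)=0$, combined with the stationarity identity, forces the multiplicative cocycle
\[
\phi_\laa(\omega;x+y)=\phi_\laa(\omega;y)\cdot\phi_\laa(\pi_y\omega;x).
\]
Setting $G(\omega):=-\log\phi_\laa(\omega;1)$ turns this into the additive cocycle $-\log\phi_\laa(\omega;n)=\sum_{k=0}^{n-1}G(\pi_k\omega)$, and uniform $L^\infty$ bounds on $a,B,q$ together with standard Harnack/gradient estimates for positive solutions of the uniformly elliptic \eqref{1.10} yield $\bbE|G|<\infty$. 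Birkhoff's ergodic theorem, applied to $\pi_1$ and (via the identity $\phi_\laa(\omega;-n)=\phi_\laa(\pi_{-n}\omega;n)^{-1}$) to $\pi_{-1}$, then gives
\[
\lim_{x\to\pm\infty}\frac{\log\phi_\laa(\omega;x)}{x}=-\gamma_\laa,\qquad\gamma_\laa:=\bbE[G]\qquad\text{a.s.}
\]
I also need $\gamma_\laa>0$; this follows because $\laa>\laa_0$ lies outside the essential spectrum of the generator of \eqref{1.10}, so $\phi_\laa$ must decay genuinely exponentially at $+\infty$, forcing $\bbE[G]>0$.

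To finish, take $\mu=\delta_\laa$ and use the sandwich $h(v_\laa)\le u_{\delta_\laa}\le\min\{v_\laa,1\}$ with $v_\laa(t,x)=e^{\laa t}\phi_\laa(\omega;x)$. Evaluating at the rightmost $\tfrac12$-level point $X(t)$ gives $\tfrac12\le v_\laa(t,X(t))\le h^{-1}(\tfrac12)$, i.e., $\log\phi_\laa(\omega;X(t))=-\laa t+\Oh(1)$. The monotonicity $(u_{\delta_\laa})_t>0$ and the pointwise limits $u_{\delta_\laa}(t,x)\to 0$ as $t\to-\infty$ and $\to 1$ as $t\to+\infty$ (both inherited from the sandwich) force $X(t)\to\pm\infty$ as $t\to\pm\infty$; dividing by $X(t)$ and plugging in the Lyapunov asymptotics then gives $X(t)/t\to\laa/\gamma_\laa=:c_\laa$ in both directions, which is \eqref{1.15}. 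The hard part will be step (b): the cocycle identity itself is clean, but verifying $\bbE|G|<\infty$ and $\gamma_\laa>0$ requires converting quantitative elliptic/Harnack bounds and the spectral condition $\laa>\laa_0$ into the a.s.\ statements needed for Birkhoff, and this is the only genuinely non-routine ingredient of the argument.
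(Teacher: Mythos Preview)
Your overall strategy matches the paper's: invariance of $\lambda_0,\lambda_1$ by ergodicity, the cocycle identity $\phi_\lambda(\omega;x+y)=\phi_\lambda(\omega;y)\,\phi_\lambda(\pi_y\omega;x)$, an ergodic theorem to extract a Lyapunov exponent, and then the sandwich \eqref{1.14} to read off the speed of $X(t)$. One pleasant by-product of your additive-cocycle/Birkhoff formulation is that the $x\to+\infty$ and $x\to-\infty$ limits are automatically both equal to $\bbE[G]$; the paper (which phrases this step via Oseledec) needs a separate short argument to show $\tau_+=\tau_-$.

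Three points need attention, however. First, measurability of $\omega\mapsto G(\omega)=-\log\phi_\lambda(\omega;1)$ is never addressed, and Birkhoff requires it; the paper handles this by proving that $\omega\mapsto\phi_\lambda'(\omega;0)$ is measurable, via continuity of the decaying-solution map on $p(\Omega)\subset L^\infty_{\rm loc}(\bbR)^3$ (this continuity itself uses the uniform exponential bound \eqref{2.8}). Second, ergodicity of the flow $\{\pi_y\}_{y\in\bbR}$ does \emph{not} imply ergodicity of the time-one map $\pi_1$, so Birkhoff for $\pi_1$ only produces a $\pi_1$-invariant limit, not a constant; either use the continuous-time Birkhoff theorem for the flow, or observe that your limit is in fact flow-invariant because $\sup_{\omega,|y|\le 1}|\log\phi_\lambda(\omega;y)|<\infty$ by your Harnack bound. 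Third --- the step you yourself flag --- the claim $\gamma_\lambda>0$ via ``$\lambda$ outside the essential spectrum'' is not a proof in this random, non-self-adjoint setting. The paper sidesteps spectral theory entirely: the bound \eqref{2.8} from the proof of Theorem~\ref{T.1.1}(ii) holds with an $L$ depending only on $a_+,B_\pm$ and $\lambda-\lambda_0$, hence uniformly in $\omega$, and gives $\phi_\lambda(\omega;nL)\le 2^{-n}$, so $\gamma_\lambda\ge L^{-1}\log 2>0$ immediately. Invoke \eqref{2.8} rather than spectral heuristics.
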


{\it Remarks.} 1. Notice that $f$ itself need not be stationary ergodic. 
\smallskip

2. If $B\equiv 1$ and $q\equiv 0$, the condition $\lambda_0<\lambda_1$ again becomes $\lambda_0<2a_-$, which is guaranteed, for instance, when $a_+<2a_-$, regardless of the structure of the randomness.  
\smallskip

3.  It is conceivable that, in general, transition fronts exist {\it almost surely} even if $\lambda_0\ge 2a_-$.  We do not know the answer to this question at this time and pose it as an open problem.
\smallskip

We also provide applications of our method in several spatial dimensions, to the study of solutions of the reaction-diffusion equation
\beq \lb{2.4a}
u_t = \nabla\cdot(B(x)\nabla u) + q(x)\cdot\nabla u + f(x,u)
\eeq
on $\bbR\times\bbR^d$, where $f,B,q$ are again as above but with $B$ a matrix field and $q$ a vector field.

Let us start with the special case 
\beq \lb{1.16}
u_t=\Delta u + f(x,u)
\eeq
with $f_u(x,0)\equiv\aaa >0$ independent of $x$.
%transition fronts and other entire  solutions of the homogeneous  
%as well as to obtaining estimates on some other solutions of \eqref{1.16}.  
%\beq \lb{1.16b}
 %f(0)=f(1)=0 \qquad\text{and}\qquad 0< f(u) \le \aaa u \text{ \,\,for $u\in(0,1)$},
%\eeq
%as well as
%\beq \lb{1.16a}
%f(u)\ge \aaa (u - Cu^{1+\delta})   \text{ \,\,for $u\in(0,1)$}.
%\eeq
The corresponding linear PDE
\beq \lb{1.17}
v_t=\Delta v + \aaa v
\eeq
has ``extremal'' solutions $v_{0} (t,x)\equiv e^{\aaa t}$ and 
\[%beq \lb{1.17c}
v_{\gamma\eta} (t,x)\equiv e^{ -\gamma\eta\cdot x + (\gamma^2+\aaa )t } = e^{-\gamma(x\cdot\eta - c_{\aaa ,\gamma} t)},
\]
with $\gamma>0$, $\eta\in\bbR^d$ a unit vector, and as before,
\[
c_{\aaa ,\gamma} = \gamma+ \aaa \gamma^{-1} \ge 2\sqrt{\aaa }. % = c_f^*.
\]    

From the one-dimensional case mentioned above it immediately follows that each traveling front for \eqref{1.16} of the form $u(t,x)=U(x\cdot\eta-ct)$ has the same decay (as $x\cdot\eta\to\infty$) as a multiple of $v_{\gamma\eta}$  for some $\gamma\in(0, \sqrt{\aaa }]$ (with an extra factor  $x\cdot\eta - 2\sqrt{\aaa}\, t$  if $\gamma=\sqrt{\aaa }\,$), and then $c=c_{\aaa ,\gamma}$.
%($\gamma>\sqrt{\aaa }$,  does not give rise to a front).  
Both  $u$ and $v_{\gamma\eta}$ travel with speed $c_{\aaa ,\gamma}$ in the direction $\eta$.

We will therefore only consider $\gamma\le\sqrt{\aaa }$ and let $Y\equiv \overline{B}(0,\sqrt{\aaa })$ be the closed ball in $\bbR^d$ with radius $\sqrt{\aaa }$ and centered at 0, with topology inherited from $\bbR^d$.  If $\mu$ is a finite non-negative non-zero Borel measure  on $Y$, then we let
\beq\lb{1.17a}
v_\mu(t,x)\equiv  \int_Y v_{\xi}(t,x) d\mu(\xi) = \int_Y e^{ -\xi\cdot x + (|\xi|^2+\aaa )t } d\mu(\xi)
\eeq
%with $v_\xi$ defined in \eqref{1.17c} 
(i.e.,  $v_{\delta_\xi}=v_\xi$).  Notice that $v_\mu(t,x)\le e^{\sqrt{\aaa }|x|+ \aaa(3+\sgn(t)) t/2}$ and it is a positive entire solution of \eqref{1.17}. 
 Also, $Y$ becomes an analog of $[-\lambda_0,-\lambda_1]\cup [\lambda_0,\lambda_1]$ in Theorem \ref{T.1.1} (the latter set supports measures corresponding to solutions from Remark 4 after Theorem \ref{T.1.1}), after recalling that for homogeneous reactions, $\laa_0=\aaa $, $\laa_1=2\aaa $, and $\gamma=\sqrt{\laa - \aaa }$.
 
Part (i) of our next result shows that each $v_\mu$ gives rise to an entire solution $u_\mu$ of \eqref{1.16}. Moreover, in parts (ii) and (iii) we address the questions when this solution connects 0 and 1 and when does the transition zone between $\eps$ and $1-\eps$ have a bounded width (in some sense) for each $\eps>0$.  
%We will not state the definition of transition fronts in several dimensions from \cite{BH2}, since it is a bit legthy
To this end, let  us define the {\it convex hull} of a measure $\mu$ on $\bbR^d$ to be
\[%beq\lb{1.17b}
\ch(\mu) \equiv \{ \zeta\in\bbR^d \,|\, \zeta=\bbE(\nu) \text{ for some measure } 0<\nu\le \mu  \},
\]
with $\bbE(\nu)\equiv \nu(\bbR^d)^{-1}\int_{\bbR^d} \xi d\nu(\xi)$.  Then $\ch(\mu)$  is convex because
\[
\bbE(\beta \nu + (1-\beta)\nu') = [ \beta \nu(\bbR^d) + (1-\beta) \nu'(\bbR^d) ]^{-1} \left[ \beta \nu(\bbR^d) \bbE(\nu) + (1-\beta) \nu'(\bbR^d) \bbE(\nu') \right]
\]
but not necessarily closed.  We note that $\ch(\mu)$ is also  the intersection of convex hulls of all essential supports of $\mu$, that is,  sets $A\subset\bbR^d$ such that $\mu(A)=\mu(\bbR^d)$ and $\mu(A')<\mu(A)$ whenever $A'\subset A$ and $A\setminus A'$ has a positive Lebesgue measure (see the remark after the proof of Theorem \ref{T.1.3}), although $\ch(\mu)$ itself need not be an essential support of $\mu$ (e.g., if $B\subset\bbR^d$ is an open ball and $\mu$ the uniform measure on the sphere $\partial B$, then $\ch(\mu)=B$).

\begin{theorem} \lb{T.1.3}
Assume \eqref{1.2}--\eqref{1.3a} for $x\in\bbR^d$ and with $a(x)\equiv a>0$.  Let $\mu$ be a finite non-negative non-zero Borel measure with support in the open ball $B(0,\sqrt{\aaa })$ and let $v_\mu$ be as in \eqref{1.17a}.

(i) There is an increasing function $h:[0,\infty)\to[0,1)$ with $h(0)=0$, $h'(0)=1$ and $\lim_{v\to\infty}h(v)=1$, and an entire solution $u_\mu$ of \eqref{1.16} such that $(u_\mu)_t>0$ and \eqref{1.14} holds.
%\beq \lb{1.19}
%h(v_\mu)\le u_\mu \le \min\{v_\mu,1\}.
%\eeq
In fact, we can  choose $h= h_{g,\alpha}$ from \eqref{2.1} below, 
%Moreover, $h$ can be chosen to depend only on $g$ and $\alpha$, 
provided $\mu$ is supported in $\overline{B}(0,\sqrt{\alpha \aaa })$. 
Also, $u_\mu \not\equiv u_{\mu'}$ when $\mu\neq\mu'$. 

(ii) We have
\beq \lb{1.19a}
\inf_{x\in\bbR^d} u_\mu(x,t) = 0 \qquad\text{and}\qquad \sup_{x\in\bbR^d} u_\mu(x,t) = 1
\eeq
for each $t\in\bbR$ (equivalently, for some $t\in\bbR$) if and only if $0\notin \ch(\mu)$.

(iii) If $0\notin \supp (\mu)$, then for each $\eps,\tht>0$ there is $L_{\eps,\tht}$ (depending also on $\dist(0,\supp(\mu))$, $f$, and $\alpha$ from (i)), such that the following holds.   If $u_\mu(t,x)\ge \eps$, then there is a unit vector $\eta_{t,x}\in\bbR^d$ such that $u_\mu(t,x+y)\ge 1-\eps$ whenever $\eta_{t,x}\cdot y|y|^{-1} \ge \tht$ and $|y|\ge L_{\eps,\tht}$.
\end{theorem}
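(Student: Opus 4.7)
\emph{Overall strategy.} The plan is to run the sub-/super-solution sandwich used for Theorem~\ref{T.1.1} in this multidimensional constant-coefficient setting, using that $v_\mu$ is an explicit positive superposition of exponentials whose gradient and growth are straightforward to track.

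\emph{Part (i).} The super-solution $\min\{v_\mu,1\}$ is standard from $(v_\mu)_t=\Delta v_\mu+av_\mu$ and $f(x,u)\le au$. For the sub-solution property of $w:=h_{g,\alpha}(v_\mu)$, the direct computation
\[
w_t-\Delta w-f(x,w)\le h'(v_\mu)\,av_\mu-h''(v_\mu)|\nabla v_\mu|^2-ag(h(v_\mu))
\]
reduces to the one-dimensional inequality established in Lemma~2.1 as soon as $|\nabla v_\mu|^2\le\alpha a v_\mu^2$. That bound comes from the triangle inequality followed by Cauchy--Schwarz:
\[
|\nabla v_\mu|^2=\Bigl|\int_Y\xi\,v_\xi\,d\mu\Bigr|^2\le\int_Y|\xi|^2v_\xi\,d\mu\cdot\int_Y v_\xi\,d\mu\le\alpha a\,v_\mu^2,
\]
using $\supp\mu\subset\ol B(0,\sqrt{\alpha a})$. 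Then $u_\mu$ is built as the limit of Cauchy problems on $[-n,\infty)\times\bbR^d$ with initial datum $h(v_\mu(-n,\cdot))$: parabolic compactness gives the limit, the maximum principle gives the sandwich \eqref{1.14}, and $(u_\mu)_t>0$ inherits from the sub-solution initial datum. Distinctness $\mu\mapsto u_\mu$ follows from $h'(0)=1$ together with injectivity of the Laplace transform $\mu\mapsto v_\mu(0,\cdot)=\int e^{-\xi\cdot x}d\mu$ on compactly supported measures, since in any region where $v_\mu\to 0$ the sandwich forces $u_\mu/v_\mu\to 1$.

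\emph{Part (ii).} The implication $0\in\ch(\mu)\Rightarrow$ \eqref{1.19a} fails is Jensen: picking $0<\nu\le\mu$ with $\bbE(\nu)=0$, convexity of $z\mapsto e^z$ applied to the probability measure $\nu/\nu(\bbR^d)$ gives $v_\nu(t,x)\ge\nu(\bbR^d)e^{(M+a)t}$ for $M:=\int|\xi|^2d\nu/\nu(\bbR^d)\ge 0$, a quantity uniformly positive in $x$; hence $u_\mu\ge h(v_\mu)\ge h(v_\nu)$ is also uniformly positive in $x$. For the converse I handle the two halves of \eqref{1.19a} separately. Since $0\notin\ch(\mu)$ rules out $\mu=c\delta_0$, some $\xi_0\in\supp\mu\setminus\{0\}$ exists, and localizing $v_\mu(0,-R\xi_0/|\xi_0|)$ to a small neighborhood of $\xi_0$ produces exponential growth in $R$, giving $\sup v_\mu=\infty$ and so $\sup u_\mu=1$. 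For $\inf u_\mu=0$ I would use separation: $0\notin\ch(\mu)$ yields a unit vector $\eta$ with $\xi\cdot\eta\ge 0$ for $\mu$-a.e.\ $\xi$ and strict inequality on a positive-$\mu$-mass set, and then dominated convergence forces $v_\mu(0,R\eta)\to 0$; the boundary case $0\in\ol{\ch(\mu)}\setminus\ch(\mu)$ is handled by restricting $\mu$ to the hyperplane $\{\xi\cdot\eta=0\}$ (which again satisfies $0\notin\ch(\cdot)$) and iterating on dimension.

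\emph{Part (iii) and main obstacle.} Since $u_\mu\ge h(v_\mu)$ and $h(v)\to 1$, it suffices to force $v_\mu(t,x+y)\ge M:=h^{-1}(1-\eps)$ throughout the cone. From $u_\mu(t,x)\ge\eps$ I get $v_\mu(t,x)\ge\eps$, and since $0\notin\supp\mu$ gives a uniform lower bound $|\xi|\ge\delta$, I fix a finite angular partition $\supp\mu=S_1\cup\cdots\cup S_N$ of angular diameter at most $\tht/2$, with $N=N(d,\tht)$. The probability measure $d\tau_{t,x}:=v_\xi(t,x)\,d\mu(\xi)/v_\mu(t,x)$ puts mass at least $1/N$ on some sector $S_{k^*}$; taking $\eta_{t,x}$ antiparallel to the centre of $S_{k^*}$ ensures $-\xi\cdot y\ge(\tht\delta/2)|y|$ for every $\xi\in S_{k^*}$ and every $y$ with $\eta_{t,x}\cdot y\ge\tht|y|$, hence
\[
v_\mu(t,x+y)\ge\int_{S_{k^*}}e^{-\xi\cdot y}v_\xi(t,x)\,d\mu(\xi)\ge\frac{\eps}{N}\exp\bigl(\tfrac{\tht\delta}{2}|y|\bigr),
\]
which exceeds $M$ as soon as $|y|\ge L_{\eps,\tht}$. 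I expect this to be the most delicate piece: the angular-pigeonhole step together with the uniform lower bound $|\xi|\ge\delta$ is precisely what makes $L_{\eps,\tht}$ independent of $(t,x)$. In Part (ii) the least routine step is producing the separating direction when $0$ lies on the relative boundary of $\ch(\mu)$, because $\ch(\mu)$ is convex but need not be closed and is defined via submeasure averages rather than through $\supp\mu$ directly.
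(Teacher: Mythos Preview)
Your approach matches the paper's throughout: the same sub-/super-solution sandwich via Lemma~\ref{L.2.1} for (i), Jensen plus a hyperplane/dimension induction for (ii), and an angular pigeonhole for (iii). Two places in your sketch need tightening to close the argument.

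\textbf{Distinctness in (i).} Your plan ``injectivity of the Laplace transform $\mu\mapsto v_\mu(0,\cdot)$, since where $v_\mu\to 0$ the sandwich forces $u_\mu/v_\mu\to 1$'' does not work at the fixed time $t=0$: there need not be any spatial region where $v_\mu(0,\cdot)$ tends to $0$ (for instance if $\mu(\{0\})>0$), so you cannot read off $v_\mu(0,\cdot)$ from $u_\mu$. The paper instead sends $t\to-\infty$, where $v_\mu\to 0$ everywhere, and recovers $\mu$ via the explicit Gaussian rescaling
\[
\Bigl(\tfrac{|t|}{\pi}\Bigr)^{d/2} v_\mu(t,2t\zeta)\,e^{(|\zeta|^2-a)t}\,d\zeta
=\int_Y \Bigl(\tfrac{|t|}{\pi}\Bigr)^{d/2} e^{-|t|\,|\xi-\zeta|^2}\,d\mu(\xi)\;\rightharpoonup\; d\mu(\zeta),
\]
which together with $u_\mu/v_\mu\to 1$ as $t\to-\infty$ shows $u_\mu$ determines $\mu$.

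\textbf{The $\inf u_\mu=0$ direction in (ii).} Your sentence ``dominated convergence forces $v_\mu(0,R\eta)\to 0$'' is false whenever $\mu(\{\xi\cdot\eta=0\})>0$; the limit is that mass, not zero. You correctly flag the iteration to lower dimension, but the actual difficulty is not producing the separating direction (a supporting-hyperplane argument, as you note) --- it is combining the directions from each step into a \emph{single} point $x$ at which \emph{all} the pieces $v_{\mu_k}$ are small simultaneously. The paper's proof does exactly your decomposition $\mu=\mu_1+\cdots+\mu_d$ with $\mu_k$ supported in $\bbR^{k-1}\times\bbR^+\times\{0\}^{d-k}$, observes that for each $k$ the set $\{x:\int e^{-\xi\cdot x}d\mu_k\le\eps/d\}$ contains $\bar B_{r_k}(0)\times[\rho_{k,r_k},\infty)\times\bbR^{d-k}$, and then chooses the radii $r_k$ recursively so these sets have a common point $x=(\rho_1,\rho_{2,r_2},\dots,\rho_{d,r_d})$. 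Your induction reaches the same conclusion once you spell this out; it is the step you should expect to be delicate, not the separation itself.

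Part (iii) is essentially identical to the paper (the paper writes the sector-count constant as $\tht/2\pi$ where you write $1/N$, but the mechanism is the same).
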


%{\it Remark.} Again we can take $h= h_{g,\alpha}$ in (i), the latter being from \eqref{2.1} below.
%\smallskip

Part (i) of this result is  closely related to a result of Hamel-Nadirashvili \cite[Theorem~1.2]{HN2}.  Under the additional assumptions of $f$ being independent of $x$, concave in $u$, and \hbox{$f\in C^2([0,1])$,} they prove the existence of an infinite-dimensional manifold of entire solutions of \eqref{1.16}.  These solutions are parametrized by measures supported on the 1-point compactification $X$ of \hbox{$\bbR^d\setminus B(0,2\sqrt{\aaa })$}, where distance from origin denotes the front speed $c\ge 2\sqrt{\aaa }$ rather than $\gamma\le \sqrt{\aaa }$.  The mapping $\gamma\mapsto c_{\aaa ,\gamma}$ yields a natural identification of $Y$ and $X$ (we consider the former a slightly more natural parameter space for our method than the latter), so one could ask what is the relationship of the two sets of entire solutions.  

Under the above additional assumptions on $f$, it is also shown in \cite[Theorem 1.4]{HN2} that any entire solution $0<u<1$ which satisfies
\beq \lb{1.20}
\lim_{t\to -\infty} \sup_{|x|<(2\sqrt{\aaa }+\eps)|t|}u(t,x) = 0
\eeq
for some $\eps>0$, is from their manifold.  This gives a characterization of all entire solutions satisfying \eqref{1.20}.  Our $u_\mu$ satisfies \eqref{1.20} with some $\eps(\alpha)>0$ as well as the properties of the solution from \cite[Theorem 1.2]{HN2} corresponding to the measure obtained from $\mu$ under the above-mentioned identification of $Y$ and $X$.  Since these 
%(somewhat involved but easily verifiable for  our $u_\mu$) 
properties uniquely define a solution in the manifold, it follows that for $f\in C^2([0,1])$, independent of $x$, and concave in $u$, the two solutions coincide; and the solutions from Theorem \ref{T.1.3}(i) are all the entire solutions of \eqref{1.16} satisfying \eqref{1.20}. 

Moreover, the manifold in \cite[Theorem 1.2]{HN2} also contains solutions corresponding to some measures supported in $X$ but not in its interior (which we do not construct in Theorem~\ref{T.1.3}), namely, those whose restriction to $\partial B(0,2\sqrt{\aaa })$ is a finite sum of Dirac masses.  

However, besides proving the existence of this manifold of solutions, \cite{HN2} only obtains certain claims about the $t\to -\infty$ asymptotic behavior of each of them, with better control only for those corresponding to measures $\mu$ which are finite sums of Dirac masses \cite[Theorem~1.1]{HN2}. The contribution of Theorem \ref{T.1.3}(i) is therefore not only in proving the existence of these entire solutions for more general (and even inhomogeneous) KPP reactions, but also in obtaining the explicit estimate \eqref{1.14}, valid for all times and yielding the new results in (ii) and (iii). Moreover,  the usage of our method (from Lemma \ref{L.2.1} below) makes the proof immediate and elementary, while the proof of \cite[Theorem~1.2]{HN2} is 30 pages long.

In fact, Theorem \ref{T.1.3} extends to some periodic $(a,B,q)$ ($f$ need not be periodic in $x$ and can even be time-dependent, as mentioned above).  Now
\[%beq \lb{2.4c}
v_{\xi} (t,x)\equiv e^{ -\xi\cdot x + \kappa_\xi t } \tht_\xi(x),
\]
where $(\tht_\xi,\kappa_\xi)$ is the unique solution of
\beq \lb{2.4e}
\nabla\cdot (B(x)\nabla\tht) + (q(x) - 2 B(x)\xi) \cdot\nabla\tht + [\xi\cdot B(x)\xi  - \nabla \cdot (B(x)\xi) - q(x)\cdot\xi + a(x)] \tht = \kappa \tht
\eeq
on the unit cell of periodicity $\calC$ (satisfying periodic boundary conditions) with $\tht_\xi>0$ and $\int_{T^{d}}\tht_\xi(x)dx=1$.  Again
\beq\lb{2.4d}
v_\mu(t,x)\equiv  \int_Y v_{\xi}(t,x) d\mu(\xi) 
\eeq
solves
\[%beq \lb{2.4b}
v_t = \nabla\cdot(B(x)\nabla v) + q(x)\cdot\nabla v + a(x)v
\]
when $\mu$ is as above.  Finally, let $S_\alpha$ be the set %(which may be empty)  
of all $\xi\in\bbR^d$ such that 
\beq\lb{2.4f}
\left\| \left( \frac{\nabla\tht_\xi}{\tht_\xi} -\xi \right) \cdot \frac Ba \left( \frac{\nabla\tht_\xi}{\tht_\xi} -\xi \right) \right\|_{L^\infty(\calC)} \le \alpha.
\eeq

\begin{theorem} \lb{T.1.5}
Assume \eqref{1.2}--\eqref{1.3a} for $x\in\bbR^d$ and with $(a,B,q)$ periodic.  Let $\mu$ be a finite non-negative non-zero Borel measure supported on $S_\alpha$ for some $\alpha<1$, and let $v_\mu$ be as in \eqref{2.4d}.  
%For every $\eps>0$ there is $\beta_\eps<1$ with $\lim_{\eps\to 0}\beta_\eps=1$ such that if $\dist((a,B,q),(\bar a,I,0))<\eps$ and $\mu$ is supported in $B(0,\sqrt{\beta_\eps \bar a })$, 
Then  Theorem \ref{T.1.3}(i)--(iii) hold with $h=h_{g,\alpha}$ from \eqref{2.1} below, except possibly the last statement in (i).
%with the exception of $u_\mu \not\equiv u_{\mu'}$ when $\mu\neq\mu'$.
\end{theorem}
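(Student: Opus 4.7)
The plan is to reduce the proof to that of Theorem \ref{T.1.3} by following exactly the same architecture: the Bloch-type profiles $v_\xi(t,x) = e^{-\xi\cdot x + \kappa_\xi t}\tht_\xi(x)$ play the role of the plain exponentials in the constant-coefficient case, the integral $v_\mu$ from \eqref{2.4d} is the supersolution candidate, and $h(v_\mu)$ with $h = h_{g,\alpha}$ from \eqref{2.1} is the subsolution candidate.  The definition of $S_\alpha$ in \eqref{2.4f} is tailored precisely so that the key hypothesis of Lemma \ref{L.2.1}, a pointwise bound of the form $\nabla v_\mu \cdot B\nabla v_\mu \le \alpha\, a(x)\, v_\mu^2$, can be propagated from a single $v_\xi$ to the integral $v_\mu$.

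First I would record that for each $\xi\in\bbR^d$ the periodic principal eigenvalue problem \eqref{2.4e} admits a unique positive solution pair $(\tht_\xi,\kappa_\xi)$ with $\int_\calC \tht_\xi\,dx=1$, depending continuously on $\xi$, by Krein--Rutman for uniformly elliptic operators on the torus.  This makes $v_\mu$ in \eqref{2.4d} a well-defined positive entire solution of the linearized PDE $v_t = \nabla\cdot(B\nabla v) + q\cdot\nabla v + a v$ by linearity.  The heart of the argument is then the following Cauchy--Schwarz computation.  Writing $w_\xi \equiv \nabla\tht_\xi/\tht_\xi - \xi$, we have $\nabla v_\xi = v_\xi w_\xi$, hence
\[
\nabla v_\mu \cdot B\, \nabla v_\mu \;=\; \Bigl(\int v_\xi w_\xi\, d\mu(\xi)\Bigr)\cdot B \Bigl(\int v_\xi w_\xi\, d\mu(\xi)\Bigr) \;\le\; \Bigl(\int v_\xi\, d\mu(\xi)\Bigr) \Bigl(\int v_\xi\, w_\xi \cdot B w_\xi\, d\mu(\xi)\Bigr) \;\le\; \alpha\, a(x)\, v_\mu^2,
\]
where the middle step is Cauchy--Schwarz applied to the $v_\xi\, d\mu$-weighted integration of the vector field $B^{1/2} w_\xi$, and the last step uses $\supp(\mu)\subset S_\alpha$ together with \eqref{2.4f}.

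With this pointwise bound in hand, Lemma \ref{L.2.1} supplies the increasing $h = h_{g,\alpha}:[0,\infty)\to[0,1)$ with $h(0)=0$, $h'(0)=1$, $h(v)\le v$, and $\lim_{v\to\infty}h(v)=1$, for which $h(v_\mu)$ is a subsolution of \eqref{2.4a}, while $\min\{v_\mu,1\}$ is a supersolution (since $f(x,u)\le a(x)u$ and $f(x,1)=0$).  Solving the Cauchy problem for \eqref{2.4a} on $[-n,\infty)\times\bbR^d$ with initial data $h(v_\mu(-n,\cdot))$, the parabolic comparison principle keeps the resulting solution sandwiched between $h(v_\mu)$ and $\min\{v_\mu,1\}$; monotonicity in $n$, together with parabolic regularity, yields in the limit $n\to\infty$ an entire solution $u_\mu$ satisfying \eqref{1.14} and $(u_\mu)_t>0$.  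This proves (i), except for the distinctness claim $u_\mu\not\equiv u_{\mu'}$, which is explicitly excluded from the statement.

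Parts (ii) and (iii) then transfer almost verbatim from the proof of Theorem \ref{T.1.3}: because $\supp(\mu)\subset S_\alpha$ is compact and $\tht_\xi$ depends continuously on $\xi$, the periodic correctors $\tht_\xi$ are uniformly bounded above and below on $\calC$ across $\xi\in\supp(\mu)$ (by elliptic Harnack), so the qualitative comparisons between $v_\mu$ and its exponential skeleton $\int e^{-\xi\cdot x + \kappa_\xi t}d\mu(\xi)$ survive up to multiplicative constants, which is exactly what the convex-hull dichotomy in (ii) and the directional estimate in (iii) use.  The only real obstacle beyond the one-line Cauchy--Schwarz estimate above is precisely this bookkeeping around the Bloch corrector $\tht_\xi$ and its $\xi$-dependence; it is where periodicity is essential but is otherwise routine given standard elliptic estimates on the unit cell $\calC$.
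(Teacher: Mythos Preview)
Your proposal is correct and follows the same approach as the paper. The paper's own proof is a single sentence stating that \eqref{2.4f} yields \eqref{2.3} for each $v_\xi$ and hence for $v_\mu$ ``because $v_\xi>0$''; your Cauchy--Schwarz computation with $B^{1/2}$ and the weight $v_\xi\,d\mu$ is precisely the content of that clause, and the remainder of your argument (Lemma \ref{L.2.1}, the limiting construction, and transferring (ii)--(iii) from Theorem \ref{T.1.3} modulo the uniformly bounded correctors $\tht_\xi$) is exactly what the paper means by ``identical to the previous proof.''
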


{\it Remark.}  We note that in general, all $S_\alpha$ for $\alpha<1$ may be empty.  However this is not the case when $B-I$ is small in $C^{1,\delta}(\bbT^{d})$ and $a-\bar a,q$ (with $\bar a\equiv \int_{\bbT^{d}} a(x)dx$) are small in $C^\delta(\bbT^{d})$ for some $\delta>0$.  Indeed, in that case we obtain a uniform (in norms of $B-I,a-\bar a,q$ in the respective spaces) bound on $\theta_\xi$ in $C^{2,\delta}(\bbT^d)$ for all $|\xi|\le 1$.  If now $(a-\bar a,B-I,q)\in C^{1,\delta}\times C^{\delta}\times C^{\delta}$ is small enough, then $\kappa_\xi-|\xi|^2-\bar a$ is also small, so  $a(x)+|\xi|^2-\kappa_\xi$ is small in $C^\delta$ and \eqref{2.4e} can be rewritten as
\begin{align*}
\Delta\tht_\xi   +2\xi\cdot\nabla\tht_\xi 
  = & - \nabla\cdot [(B(x)-I)\nabla\tht_\xi] - [q(x) - 2 (B(x)-I)\xi] \cdot\nabla\tht_\xi  \\
 & - [\xi\cdot (B(x)-I)\xi  - \nabla \cdot (B(x)\xi) - q(x)\cdot\xi + a(x)+|\xi|^2-\kappa_\xi] \tht_\xi ,
\end{align*}
with the right-hand side uniformly small in $C^\delta$ for all $|\xi|\le 1$.  Thus $\tht_\xi- \int_{T^{d-1}}\tht_\xi(x)dx = \tht_\xi-1$ is uniformly small in $C^{2,\delta}$.  This means that for each $\beta<1$, \eqref{2.4f} holds for  $\alpha\equiv \tfrac 12(1+\beta)$ and all $|\xi|\le\beta$ provided $(a-\bar a,B-I,q)$ is sufficiently small in $C^{1,\delta}\times C^{\delta}\times C^{\delta}$.
%; and one can take $\beta\to 1$ (and choose $\alpha$ appropriately) as $(B-I,a-\bar a,q)\to 0$ in $C^{1,\delta}\times C^{\delta}\times C^{\delta}$.
\smallskip

We end this introduction  with an application of our method to obtaining explicit bounds on certain solutions $u$ of \eqref{1.16} with constant $f_u(x,u)=a$, in terms of the solutions of the {\it heat equation} $\til u_t=\Delta \til u$ with the same initial condition (in which case $\til u\le u \le e^{\aaa t}\til u $).  Of course, the latter is just
\beq\lb{1.20a}
\til u(t,x)=(4\pi t)^{-d/2} \int_{\bbR^d} e^{-|x-y|^2/4t} u(0,y) dy.
\eeq
%and obviously $\til u\le u$ by the comparison principle. The following result gives a lower bound which is much better for small $\til u$.

\begin{theorem} \lb{T.1.4}
Assume \eqref{1.2}--\eqref{1.3a} for $x\in\bbR^d$ and with $a(x)\equiv a>0$.  Let $0\le u\le 1$ solve \eqref{1.16} on $\bbR^+\times\bbR^d$.  If $\til u$ from \eqref{1.20a} satisfies
%is the solution of $\til u_t=\Delta \til u$ on $\bbR^+\times\bbR^d$ with $\til u(0,\cdot)=u(0,\cdot)$ and  
\beq \lb{1.21}
|\nabla \til u(t_0,x)| \le \sqrt{\alpha \aaa }\, \til u(t_0,x)
\eeq 
for some $t_0\ge 0$, $\alpha<1$, and all $x\in\bbR^d$, then
\beq \lb{1.22}
h_{g,\alpha} \left( e^{\aaa (t-t_0)} \til u(t,x) \right) \le u(t,x) \le \min\{e^{\aaa t} \til u(t,x), 1\}
\eeq 
for all $(t,x)\in \bbR^+\times\bbR^d$, with $h_{g,\alpha}$ from \eqref{2.1} below  (in particular,  $h_{g,\alpha}'(0)=1=h_{g,\alpha}(\infty)$).
\end{theorem}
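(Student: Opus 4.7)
My plan is to sandwich $u$ between two quantities built directly from the heat solution $\tilde u$. For the upper bound in \eqref{1.22}, I would observe that by \eqref{1.2} the KPP bound $f(x,u)\le a u$ makes $u$ a subsolution of $v_t=\Delta v+a v$, so parabolic comparison with the exact solution $e^{at}\tilde u$ (having the same initial data $u(0,\cdot)$) yields $u(t,x)\le e^{at}\tilde u(t,x)$; the bound $u\le 1$ is immediate since $f(x,1)=0$ makes the constant $1$ a supersolution of \eqref{1.16}.

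For the more interesting lower bound, I set $v(t,x):=e^{a(t-t_0)}\tilde u(t,x)$, which satisfies the linearization $v_t=\Delta v+a v$ at $u=0$ and coincides with $\tilde u$ at $t=t_0$. The key step is to apply Lemma~\ref{L.2.1} to conclude that $h_{g,\alpha}(v)$ is a subsolution of \eqref{1.16} on $[t_0,\infty)\times\bbR^d$. The structural input that lemma needs (by analogy with its use in Theorem~\ref{T.1.3}, where $v_\mu$ supported in $\overline B(0,\sqrt{\alpha a})$ yields $|\nabla v_\mu|\le\sqrt{\alpha a}\,v_\mu$) is the pointwise bound $|\nabla v(t,x)|\le\sqrt{\alpha a}\,v(t,x)$, which at $t=t_0$ is precisely the hypothesis \eqref{1.21}, since the multiplicative factor $e^{a(t-t_0)}$ cancels in the ratio $|\nabla v|/v=|\nabla\tilde u|/\tilde u$. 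To extend this to all $t\ge t_0$ I would run a Bernstein-type argument: writing $\rho:=\log v$, the equation gives $\rho_t=\Delta\rho+|\nabla\rho|^2+a$, and a direct computation shows that $w:=|\nabla\rho|^2$ satisfies
\[
w_t-\Delta w-2\nabla\rho\cdot\nabla w=-2|\nabla^2\rho|^2\le 0,
\]
so $w$ is a subsolution of a linear parabolic equation and $\sup_x w(t,\cdot)$ is non-increasing for $t\ge t_0$ by the parabolic maximum principle.

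With the gradient bound propagated, Lemma~\ref{L.2.1} produces $h_{g,\alpha}$ and certifies that $h_{g,\alpha}(v)$ is a subsolution of \eqref{1.16} on $[t_0,\infty)\times\bbR^d$. I would then compare with $u$ at time $t_0$: since $h_{g,\alpha}(s)\le s$ (a property of the construction \eqref{2.1}) and $u$ is a supersolution of the heat equation (because $f\ge 0$ on $[0,1]$ by \eqref{1.2}--\eqref{1.3}),
\[
h_{g,\alpha}(v(t_0,x))\le v(t_0,x)=\tilde u(t_0,x)\le u(t_0,x),
\]
so parabolic comparison yields $h_{g,\alpha}(v(t,x))\le u(t,x)$ for all $t\ge t_0$. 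For $0\le t<t_0$ the exponential factor satisfies $e^{a(t-t_0)}\le 1$, giving $e^{a(t-t_0)}\tilde u(t,x)\le\tilde u(t,x)\le u(t,x)$, and one more application of $h_{g,\alpha}(s)\le s$ closes the case.

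The main obstacle I expect is the rigorous justification of the maximum principle for $w$: the drift coefficient $\nabla\rho$ is not bounded a priori independently of $w$ itself (in fact $|\nabla\rho|=\sqrt w$), so one has to bootstrap---either through a standard approximation using compactly supported cutoffs combined with the uniform $w(t_0,\cdot)\le\alpha a$ estimate, or through a barrier argument exploiting the explicit Gaussian form \eqref{1.20a} of $\tilde u$ on $\bbR^d$. Once that technical point is settled, the remainder of the argument is a routine combination of Lemma~\ref{L.2.1} with the comparison principle.
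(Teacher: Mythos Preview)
Your proposal is correct and follows essentially the same approach as the paper. The paper also sets $v=e^{a(t-t_0)}\tilde u$, derives the equation $\rho_t=\Delta\rho+2r\cdot\nabla\rho-2|\nabla r|^2$ for $\rho=|r|^2$ with $r=\nabla v/v$ (your $w=|\nabla\rho|^2$ with $\rho=\log v$, i.e., the same Bernstein computation in different notation), invokes the maximum principle to propagate \eqref{1.21}, and then applies Lemma~\ref{L.2.1}; your explicit handling of the range $0\le t<t_0$ and of the initial comparison $h_{g,\alpha}(\tilde u(t_0,\cdot))\le\tilde u(t_0,\cdot)\le u(t_0,\cdot)$ is slightly more detailed than the paper's terse ``as well as $\tilde u\le u$'', but the content is identical.
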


We prove Theorems \ref{T.1.1}--\ref{T.1.4} in the next section, after introducing our main tool, Lemma~\ref{L.2.1}.

Finally, we note that existence of transition fronts for \eqref{1.1} with very general $f$ (including KPP) is claimed in the paper \cite{SLL}.  This statement is false in the full generality claimed there (in particular, it contradicts the non-existence result in \cite{NRRZ}), and its proof is also incorrect.  The latter is a direct adaptation of the existence-of-fronts proof for ignition reactions from \cite{MRS} which, however, does not extend to non-ignition reactions.  In particular, various claims in \cite{SLL}, such as the one between (2.22) and (2.23), Corollary 2.6(i), and Proposition 2.7, are made without a proof and are, in fact, false for general  non-ignition reactions.

The author would like to thank Fran\c cois Hamel for pointing out the argument in the remark after Theorem \ref{T.1.5}.  He also acknowledges partial support by NSF grants DMS-1113017 and DMS-1056327, and by an Alfred P. Sloan Research Fellowship.

%%%%%%%%%%%%%%%%%%%%%%%%%%%%%%%%%%%%%%%%%%%%%%
\section{The Key Lemma and the Proofs of Theorems \ref{T.1.1}--\ref{T.1.4}} \lb{S2}
%%%%%%%%%%%%%%%%%%%%%%%%%%%%%%%%%%%%%%%%%%%%%%

Our main tool is the following lemma, which constructs sub-solutions $w=h(v)$ of  \eqref{1.9} from certain solutions $v$ of \eqref{1.10} (which are  also super-solutions of \eqref{1.9}).  Here the function $h=h_{g,\alpha}:[0,\infty)\to[0,1)$ depends on $g\in C^1([0,1])$ satisfying \eqref{1.3}, \eqref{1.3a} and also on an additional parameter $\alpha\le 1$.  Specifically, $h_{g,\alpha}(0)=0$ and
\beq \lb{2.1}
h_{g,\alpha}(v) \equiv  U_{g,\sqrt{\alpha}}(-\alpha^{-1/2} \ln v)
\eeq
for $v>0$, where $U_{g,\sqrt{\alpha}}$ is the traveling front profile for the homogeneous PDE
\beq \lb{A.1}
u_t=u_{xx} + g(u)
\eeq
corresponding to speed $c_{1,\sqrt{\alpha}}\equiv \alpha^{1/2} + \alpha^{-1/2}\ge 2$.    That is, $U_{g,\sqrt{\alpha}}(-\infty)=1$, $U_{g,\sqrt{\alpha}}(\infty)=0$, $U_{g,\sqrt{\alpha}}'<0$, and 
\beq \lb{A.1a}
U''_{g,\sqrt{\alpha}}+c_{1,\sqrt{\alpha}} U'_{g,\sqrt{\alpha}}+g(U_{g,\sqrt{\alpha}})=0
\eeq
on $\bbR$.   Notice that the $\lim_{v\to\infty} h_{g,\alpha}(v)=1$ and \eqref{A.1a} implies
\beq \lb{2.5d}
\alpha v^2 h_{g,\alpha}''(v) - vh_{g,\alpha}'(v) + g(h_{g,\alpha}(v))=0.
\eeq

 It is well known that $U_{g,\sqrt{\alpha}}$ is unique up to translation and if $\alpha<1$, then there is a unique translation such that $\lim_{x\to\infty} U_{g,\sqrt\alpha}(x) e^{\sqrt\alpha x}=1$ \cite{Uchi}.  With this choice of $U_{g,\sqrt\alpha}$ we obtain $h_{g,\alpha}'(0)=1$ for $\alpha<1$.  It then also follows that 
 \beq\lb{2.6c}
 h_{g,\alpha}(v)\le v
 \eeq
 for $v\in[0,\infty)$ because $h_{g,\alpha}''<0$ (see the proof of Lemma \ref{L.2.1} below).
 
 For $\alpha=1$ we instead have $\lim_{v\to 0} h_{g,\alpha}(v)(-v\ln v)^{-1}=1$, provided the first condition in \eqref{1.3a} is replaced by $\int_0^1 [u-g(u)] |\ln u| u^{-2} du<\infty$ \cite{Uchi}.  

We state the lemma in a more general form, with time-dependent coefficients.

%Here the function $h=h_{C,\delta,\alpha}$ depends on $C,\delta$ from \eqref{1.5} and also on an additional parameter $\alpha<1$ as follows.  We let $\ka\equiv\min\{\delta, (1-\alpha)/2\alpha,1 \}$, $D\equiv C/\ka(1-\alpha-\ka\alpha)$, $\bar v\equiv [D(1+\ka)]^{-1/\ka}$, and 
%\beq \lb{2.1}
%h_{C,\delta,\alpha}(v) \equiv 
%\begin{cases}
%v-Dv^{1+\ka} & v\in [0, \bar v], 
%\\  \ka (1+\ka)^{-1}\bar v & v\ge \bar v. 
%\end{cases}
%\eeq
%Then $h_{C,\delta,\alpha}\in C^1(\bbR^+)$ and from $C\ge 1$ we have $D\ge 1$ and $\bar v<1$. Since $w\le v$, it follows that any solution of \eqref{1.9} which is between the two at an initial time $t_0$, will stay that way.  

\begin{lemma} \lb{L.2.1}
With $f,a,B,q$ Lipschitz and time-dependent ($B$ a matrix and $q$ a vector field), assume \eqref{1.2}--\eqref{1.4} and \eqref{1.11} for $(t,x)\in(t_0,t_1)\times\bbR^d$ (where $a(t,x)\equiv f_u(t,x,u)$ and  \hbox{$-\infty< t_0<t_1\le\infty$}).  Let $v>0$ be a solution of
\[%beq \lb{2.2}
v_t = \nabla\cdot(B(t,x)\nabla v) + q(t,x)\cdot\nabla v + a(t,x) v
\]
on $(t_0,t_1)\times\bbR^d$.
%, with $B$ a Lipschitz uniformly elliptic matrix  and $q$ a Lipschitz vector field.  
If  for some $\alpha<1$,
\beq \lb{2.3}
\nabla v(t,x) \cdot B(t,x) \nabla v(t,x) \le \alpha a(t,x) v(t,x)^2
\eeq
holds for all $(t,x)\in (t_0,t_1)\times\bbR^d$, then $v$ and $w\equiv h_{g,\alpha}(v)$ are a super- and sub-solution  of 
\beq \lb{2.4}
u_t = \nabla\cdot(B(t,x)\nabla u) + q(t,x)\cdot\nabla u + f(t,x,u)
\eeq
on $(t_0,t_1)\times\bbR^d$. % (the latter in the distributional sense).  
Therefore, if $0\le u\le 1$ solves \eqref{2.4} with $w(t_0,x)\le u(t_0,x)\le v(t_0,x)$ for all $x\in\bbR^d$, then for all $(t,x)\in (t_0,t_1)\times\bbR^d$ we have
\beq \lb{2.5}
w(t,x)\le u(t,x)\le \min\{v(t,x),1\}.
\eeq
\end{lemma}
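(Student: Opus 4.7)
The plan is to verify pointwise on $(t_0,t_1)\times\bbR^d$ that $v$ is a super-solution and $w:=h_{g,\alpha}(v)$ is a sub-solution of the nonlinear PDE \eqref{2.4}, and then sandwich $u$ between them by parabolic comparison.

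The super-solution property of $v$ is immediate: it solves the linear equation \eqref{1.10} (with time-dependent coefficients) and $f(t,x,v)\le a(t,x)v$ by the upper bound in \eqref{1.2}. The constant $1$ is also a super-solution since $f(t,x,1)=0$, which together with $u\le 1$ yields the $\min\{v,1\}$ on the right of \eqref{2.5}. For the sub-solution, I would apply the chain rule to $w=h_{g,\alpha}(v)$ to obtain
\[
w_t - \nabla\!\cdot\!(B\nabla w) - q\cdot\nabla w = h'(v)\bigl[v_t - \nabla\!\cdot\!(B\nabla v) - q\cdot\nabla v\bigr] - h''(v)\,\nabla v\cdot B\nabla v = a(t,x)\,v\,h'(v) - h''(v)\,\nabla v\cdot B\nabla v,
\]
the last equality using the linear equation for $v$. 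Since $h''_{g,\alpha}\le 0$ (see below), the gradient hypothesis \eqref{2.3} gives $-h''(v)\,\nabla v\cdot B\nabla v\le -\alpha\,a(t,x)v^{2}h''(v)$, and the ODE \eqref{2.5d} then collapses the right-hand side to $a(t,x)\,g(h(v))$, which is at most $f(t,x,w)$ by the lower bound in \eqref{1.2}. Thus $w$ is a sub-solution.

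The ordering $w(t_0,\cdot)\le u(t_0,\cdot)\le v(t_0,\cdot)$ then propagates to all later times by the standard parabolic comparison principle. For $\phi:=u-v$, the bound $f(t,x,u)\le a(t,x)u$ (valid since $u\in[0,1]$) yields $\phi_t-\nabla\!\cdot\!(B\nabla\phi)-q\cdot\nabla\phi-a(t,x)\phi\le 0$, and the maximum principle for bounded above sub-solutions of linear parabolic equations with $L^\infty$ coefficients on $\bbR^d$ forces $\phi\le 0$ throughout; the same reasoning handles $u-1$, and the Lipschitz property of $f$ (together with $h''\le 0$, $h'\ge 0$) handles $w-u$ analogously.

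The only nonroutine input is the concavity $h''_{g,\alpha}\le 0$, which the paper records right after \eqref{2.6c} but does not prove in detail. I would derive it from phase-plane analysis of the KPP traveling wave $U_{g,\sqrt\alpha}$: the normalization $\lim_{x\to\infty}U_{g,\sqrt\alpha}(x)e^{\sqrt\alpha x}=1$ selects the tail-decay rate $\sqrt\alpha$, the smaller root of the characteristic equation at $U=0$, and a standard argument on $P(U):=-U'_{g,\sqrt\alpha}(U)$ using the first condition of \eqref{1.3a} yields $P(U)\ge\sqrt\alpha\,U$ on $(0,1)$; under the substitution \eqref{2.1} this is exactly $v\,h'_{g,\alpha}(v)\le g(h_{g,\alpha}(v))$, and rearranging \eqref{2.5d} gives $h''_{g,\alpha}\le 0$. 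The hypothesis $\alpha<1$ is essential here, since for $\alpha=1$ the two characteristic roots coincide, the ODE \eqref{2.5d} degenerates, and $h(v)\le v$ can fail.
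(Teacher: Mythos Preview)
Your overall strategy is exactly the paper's: the same chain-rule computation reduces the sub-solution property of $w=h_{g,\alpha}(v)$ to the concavity $h''\le 0$ combined with the ODE \eqref{2.5d}, and the super-solution and comparison parts are routine.

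The gap is in your sketch of $h''_{g,\alpha}\le 0$. First, the translation is wrong: from \eqref{2.1} one gets $v\,h'(v)=-\alpha^{-1/2}U'=\alpha^{-1/2}P(U)$, so the desired inequality $v\,h'\le g(h)$ (equivalent to $h''\le 0$ via \eqref{2.5d}) reads $P(U)\le \sqrt\alpha\,g(U)$, not $P(U)\ge \sqrt\alpha\,U$. Second, your stated inequality $P(U)\ge \sqrt\alpha\,U$ is in fact false: the correct bound $-U'<\sqrt\alpha\,g(U)\le \sqrt\alpha\,U$ goes the other way. The paper establishes $-U'<\sqrt\alpha\,g(U)$ as Lemma~\ref{L.A.1} by a phase-plane argument: on the curve $V=-\sqrt\alpha\,g(U)$ the vector field $(V,-cV-g(U))$ points into the region $\{-\sqrt\alpha\,g(U)<V<0\}$ precisely because $g'\le 1$ (the \emph{second} condition in \eqref{1.3a}, not the integral condition you invoke), and the trajectory therefore cannot cross out of it. With this correction in place your argument coincides with the paper's.
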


{\it Remark.} Of course, the crucial hypothesis here is \eqref{2.3}.

\begin{proof}
Obviously $v$ is a super-solution of \eqref{2.4}, giving the second inequality.  We also have
\begin{align*}
w_t -  \nabla\cdot(B\nabla w) - q\cdot\nabla w & = h'(v) [v_t -  \nabla\cdot(B\nabla v) - q\cdot\nabla v] - h''(v) \nabla v \cdot B \nabla v 
\\ & =h'(v) a v - h''(v) \nabla v \cdot B \nabla v 
\\ & \le a [ h'(v) v - \alpha h''(v) v^2 ].
\end{align*}
In the last inequality we used \eqref{2.3} and $h''<0$.  The latter is due to \eqref{2.5d} and Lemma \ref{L.A.1} from the Appendix with $\gamma\equiv \sqrt\alpha$, which yield
\[
\alpha v^2 h''(v) = vh'(v) - g(h(v)) = -\alpha^{-1/2} U_{g,\sqrt\alpha}'(-\alpha^{-1/2} \ln v) -  g(U_{g,\sqrt\alpha}(-\alpha^{-1/2} \ln v))<0.
\]
Thus \eqref{2.5d} and \eqref{1.2} give
\[
w_t -  \nabla\cdot(B(t,x)\nabla w) - q(t,x)\cdot\nabla w \le a(t,x)g(h(v)) \le f(t,x,w),
\]
so $w$ is a sub-solution of \eqref{1.9}, and the first inequality in \eqref{2.5} follows as well.
\end{proof}

%in the distributional sense if we define $h''(\bar v)\equiv 0$.  This can be obtained by approximating $h$ by a sequence $h_k\in C^2(\bbR^+)$ such that $h_k(v)=h(v)$ for $v\in\bbR^+\setminus(\bar v-\tfrac 1k,\bar v)$  and $|h_k''(v)|\le 3D\bar v^{\ka-1}$ for $v\in (\bar v-\tfrac 1k,\bar v)$, and then considering the above for the sequence $w_k\equiv h_k(v)$.% ($\to w$ in the sense of distributions).

%When $v(t,x)< \bar v$, we have
%\[
%h'(v) v - \alpha h''(v) v^2 = h(v) - C v^{1+\ka} \le h(v) - C v^{1+\delta} \le h(v)- Ch(v)^{1+\delta} \le a(t,x)^{-1} f(t,x,h(v))
%\]
%%for any $(t,x)\in(t_0,t_1)\times\bbR^d$, 
%and when $v(t,x)\ge \bar v$, then
%\[
%h'(v) v - \alpha h''(v) v^2 = 0 \le a(t,x)^{-1} f(t,x,h(v)).
%\]
%\end{proof}

\begin{proof}[Proof of Theorem \ref{T.1.4}]
The comparison principle, together with \eqref{1.2} yields the upper bound, as well as $\til u\le u$.  Then let $v(t,x)\equiv e^{\aaa (t-t_0)}\til u(t,x)$ and note that $r\equiv\nabla v v^{-1} = \nabla\til u\til u^{-1}$ satisfies
\[
r_t=\Delta r + \nabla(|r|^2)
\]
because
\[
(\ln \til u)_t = {\Delta \til u} {\til u^{-1}} = \Delta(\ln \til u) + |r|^2.
\]
Thus $\rho\equiv |r|^2$ satisfies
\[
\rho_t=\Delta \rho +2r\cdot \nabla \rho -2|\nabla r|^2,
\]
so \eqref{1.21} and the maximum principle give $\rho(t,x) \le \alpha \aaa $ for $(t,x)\in(t_0,\infty)\times\bbR^d$. Then Lemma~\ref{L.2.1} yields the lower bound in \eqref{1.22}.
\end{proof}

\begin{proof}[Proof of Theorem \ref{T.1.1}]
(i) Let us start with the proof of existence of $\phi_\lambda$ from \eqref{1.12}, for $\lambda>\lambda_0$.  With $\calL$ the operator on the left-hand side of \eqref{1.12} and $\lambda_0$ from \eqref{1.11a},  we have
\[%beq \lb{2.5b}
\int_\bbR \psi(x)[(\lambda-\calL)\psi](x) dx \ge (\lambda-\lambda_0) \int_\bbR \psi(x)^2dx
\]
for $\psi\in H^2(\bbR)$, after integrating by parts.
Thus $(\lambda-\calL)^{-1}:L^2(\bbR)\to H^2(\bbR)$ exists and if $0\not\equiv \psi\in L^2(\bbR)$ is compactly supported in $\bbR^-$, then $0\not\equiv \phi\equiv (\lambda-\calL)^{-1}\psi\in H^2(\bbR)$.  Since $\phi$ also satisfies \eqref{1.12} on $\bbR^+$, Harnack inequality shows that $\lim_{x\to\infty} \phi(x)=0$.  Let $\til \phi(x)\equiv \phi(x)$ for $x\ge 0$ and extend it onto $\bbR^-$ so that it solves \eqref{1.12}.  Then $\til \phi$ has no roots because if $\til\phi(x_0)=0$, then plugging the function $\til\phi|_{[x_0,\infty)}$, extended by 0 on $(-\infty,x_0)$, into \eqref{1.11a} would yield $\lambda_0\ge\lambda$.  Thus we have $\phi_\lambda(x) = \til\phi(x)\til\phi(0)^{-1}$.  Uniqueness follows from existence of $\psi_\lambda$ with the same properties but with $\lim_{x\to -\infty} \psi_\lambda(x)=0$ (by a reflected argument), from $\lim_{x\to -\infty} \phi_\lambda(x)=\infty$ (by \eqref{2.8} below), and the fact that the space of solutions of \eqref{1.12} is two-dimensional.  

%Note now that \eqref{1.13} and $\lambda_0\ge a_-$ yield $q_+<\sqrt{(aB)_-}$.  
Next, choose $\alpha<1$ such that 
\[%beq \lb{2.5a}
m\equiv  \inf_{x\in\bbR, \beta\ge\alpha} \left\{ a(x) + \sqrt{\beta (aB)_-} \left[ \sqrt{\beta (aB)_-} - |q(x)| \right] B(x)^{-1}  \right\} - \mu_1>0.
\]
Any $\alpha\in( 1-(\lambda_1-\mu_1)a_+^{-1},1)$ works because the derivative of the expression in the brackets with respect to $\beta$ is bounded above by $(aB)_-B(x)^{-1}\le a_+$  and is positive for $\beta> 1$ (the latter due to $q_+\le 2\sqrt{(aB)_-}$). Now let $w_\mu(t,x)\equiv h_{g,\alpha}(v_\mu(t,x))$ and notice that $w_\mu\le v_\mu$ by \eqref{2.6c}.  Then Lemma~\ref{L.2.1} will be applicable to $v_\mu,w_\mu$ once we establish
\beq \lb{2.6}
B(x) \phi_\laa'(x)^2 \le \alpha a(x) \phi_\laa(x)^2
\eeq
for all $\laa\in(\lambda_0,\mu_1]$ and $x\in\bbR$.  Indeed, \eqref{2.6} and $\phi_\laa>0$ then yield \eqref{2.3} for $v_\mu$.  To this end, we need to show 
\beq \lb{2.7}
|\psi(x)| \le \sqrt{\alpha a(x)B(x)}
\eeq
for $x\in \bbR$, with $\psi \equiv B\phi_\laa'/\phi_\laa$ and $\laa\in(\lambda_0,\mu_1]$.  

Let us assume that $\psi(x_0)\ge \sqrt{\alpha (aB)_-}$ for some $x_0$.  We have $\psi'=\laa-a-\psi(\psi+q)B^{-1}$ on $\bbR$, so $\psi'(x_0)\le \lambda-m-\mu_1\le -m$.  %Then from $\lambda\le \mu_1\le \alpha\lambda_1$ we get
%\[
%\laa-a(x_0)-\psi(x_0)^2 B(x_0)^{-1} \le \alpha\lambda_1  - \alpha \left [ a(x_0) + B(x_0)^{-1} (aB)_-  \right ] - (1-\alpha) a(x_0) \le - (1-\alpha) a_-,
%\]
%which means that $\psi'(x_0)\le - (1-\alpha) a_-$.  
But then $\psi$ must be decreasing on $(-\infty,x_0]$ with $\psi'\le-m$ there.
%because otherwise there would be $x_1<x_0$ with $\psi(x_1)\ge \sqrt{\alpha (aB)_-}$ and $\psi'(x_1)=0$, a contradiction with the above argument.  
From this and $\psi'=\laa-a-(\psi^2+q\psi)B^{-1}$ it follows that $\psi$ must blow up at some $x_1\in (-\infty, x_0)$, a contradiction.  We obtain the same conclusion when assuming $\psi(x_0)\le - \sqrt{\alpha (aB)_-}$ (because $\psi'=\laa-a-|\psi|(|\psi|-q)B^{-1}$ when $\psi<0$), with blowup at some $x_1\in (x_0,\infty)$.  It follows that $\| \psi \|_\infty\le \sqrt{\alpha(aB)_-}$, which gives \eqref{2.7}, so Lemma \ref{L.2.1} applies to $v_\mu,w_\mu,\alpha$.

A standard limiting argument (see, for instance, \cite{FM}) now recovers an entire solution to \eqref{1.9} between $\min\{v_\mu,1\}$ and $w_\mu$.  Indeed, we let $u_k$ be the solution of \eqref{1.9} on $(-k,\infty)\times\bbR$ with initial datum $u_k(-k,x)\equiv w_\mu(-k,x)$.  Then by Lemma \ref{L.2.1} we have 
\[
w_\mu(t,x) \le u_k(t,x)\le \min\{v_\mu(t,x),1\}
\] 
on $(-k,\infty)\times\bbR$.   By parabolic regularity, there is a locally uniform (on $\bbR^2$) limit $u_\mu\in[w_\mu,\min\{v_\mu,1\}]$ of $u_k$ (along a subsequence if needed), which is an entire solution of \eqref{1.9}. Since $(w_\mu)_t\ge 0$, the same is true for $u_k$ and thus $u_\mu$, by the maximum principle.  The strong maximum principle then gives $(u_\mu)_t>0$ because $(u_\mu)_t\not\equiv 0$. 

Finally, \eqref{1.6} follows from \eqref{1.14} and $v_\mu(-\infty)=\infty$, the latter being due to \eqref{2.8} below.

(ii) The fact that $u_\mu$ is a transition front with a bounded width in the sense of \eqref{1.7} when $\lambda_0<\mu_0\le \mu_1<\lambda_1$ will follow from the existence of $L>0$ such that 
\beq \lb{2.8}
\phi_\lambda(c) \ge 2 \phi_\lambda(d)
\eeq
whenever $\lambda\in[\mu_0,\mu_1]$ and $c\le d-L$.  Indeed, we will show that such $L$ depends only on  $a_+,B_\pm,\zeta$, provided $\mu_0-\lambda_0\ge\zeta>0$.  Then \eqref{2.8} holds with the same $L$ for $v_\mu$ in place of $\phi_\lambda$.  Therefore, if  now $\min\{\mu_0-\laa_0, \laa_1-\mu_1\} \ge\zeta>0$, then this and (i) gives \eqref{1.7} with $L_\eps$ depending only on  $g,a_+,B_\pm,\eps,\zeta$.

%It is easy to prove that for every $\eps>0$ there is $T_\eps<\infty$ such that if $\til u$ solves $\til u_t=(B(x) \til u_x)_x + g(\til u)$ with any $B$ as above and $\til u(0,\cdot)\ge \ka(1+\ka)^{-1}\bar v\chi_{\bbR^-}(\cdot)$, then $\til u(T_\eps,\cdot)\ge (1-\eps)\chi_{\bbR^-}(\cdot)$.  (Otherwise take a sequence $\{\til u_k,B_k\}_k$ which violates this for times $T_{k}\to\infty$ as $k\to\infty$.  By parabolic regularity, a subsequence converges to $\til u,B$  violating the claim for each $T>0$, which is a contradiction because for any fixed $B$ and $\til u(0,\cdot)\ge \ka(1+\ka)^{-1}\bar v\chi_{\bbR^-}(\cdot)$ we have $\til u\to 1$ as $t\to\infty$, uniformly on $\bbR^-$, due to $g(u)>0$ for $u\in(0,1)$.) Let $L_\eps\equiv L \lceil \log_2 (\bar v\eps^{-1}e^{\lambda T_\eps}) \rceil$ and for any time $t$ let $x_t$ be the rightmost point with $v(t,x_t)=\eps e^{-\lambda T_\eps}$ (i.e., $\phi_\lambda(x_t)=\eps e^{-\lambda (T_\eps+t)}$).  Then for $x\le x_t-L_\eps$ we have $v(t,x)\ge \bar v$ by \eqref{2.8} and so $u(t,x)\ge w(t,x)= \ka(1+\ka)^{-1}\bar v$ for $x\le x_t-L_\eps$.  Thus $u(t+T_\eps,x)\ge 1-\eps$ for $x\le x_t-L_\eps$ by the definition of $T_\eps$, and $u(t+T_\eps,x)\le v(t+T_\eps,x)\le \eps$ for $x\ge x_t$ by the definition of $x_t$.  Since $t$ was arbitrary, this proves \eqref{1.7} as well as \eqref{1.6}.

We are left with proving \eqref{2.8}. 
% We will use the fact that the definition of $\lambda_0$ and self-adjointness of the operator $\partial_x(B(x)\partial_{x})+a(x)$ yields the Rayleight quotient formula
%\beq \lb{2.9}
%\lambda_0=\sup_\psi \frac{\int_\bbR [a(x)\psi(x)^2 - B(x)\psi'(x)^2] dx}{\int_\bbR \psi(x)^2 dx}
%\eeq
%with the supremum over all non-zero compactly supported  $\psi\in H^1(\bbR)$.  
If in \eqref{1.11a} we take 
\[
\psi(x)\equiv
\begin{cases}
\phi_\lambda (x) & x\in(c,d), 
\\ \phi_\lambda(c)(x-c+1) & x\in[c-1,c],
\\ \phi_\lambda(d)(d+1-x) & x\in[d,d+1],
\\ 0 & x\in\bbR\setminus [c-1,d+1]
\end{cases}
\]
for some $c<d$, then we obtain using \eqref{2.6} and $\alpha<1$,
\begin{align*}
\int_ \bbR & [- B(x)\psi'(x)^2 + q(x)\psi'(x)\psi(x) + a(x)\psi(x)^2 ] dx   
\\ \ge & \int_c^d [- B(x)\phi_\lambda'(x)^2 + q(x)\phi_\lambda'(x)\phi_\lambda(x) + a(x)\phi_\lambda(x)^2 ] dx - (B_+ + q_+)(\phi_\lambda(c)^2+\phi_\lambda(d)^2)
\\ \ge & \int_c^d [(B(x)\phi_\lambda'(x))' + q(x)\phi_\lambda'(x) + a(x)\phi_\lambda(x)  ]\phi_\lambda(x) dx 
\\ & \qquad - (B_+ + q_+)(|\phi_\lambda'(c)|\phi_\lambda(c) + |\phi_\lambda'(d)|\phi_\lambda(d) + \phi_\lambda(c)^2+\phi_\lambda(d)^2)
\\ \ge &  \lambda \int_c^d \phi_\lambda(x)^2 dx - (B_+ + q_+)(1+a_+^{1/2}B_-^{-1/2})( \phi_\lambda(c)^2+\phi_\lambda(d)^2).
\end{align*}
This and \eqref{1.11a} %\eqref{2.9}  
give
\[
\lambda_0 \int_c^d \phi_\lambda(x)^2dx \ge  \lambda \int_c^d \phi_\lambda(x)^2 dx - [\lambda_0 + (B_+ + q_+)(1+a_+^{1/2}B_-^{-1/2})]( \phi_\lambda(c)^2+\phi_\lambda(d)^2),
\]
which after setting $M\equiv [\lambda_0 + (B_+ + q_+)(1+a_+^{1/2}B_-^{-1/2})](\lambda-\lambda_0)^{-1}$ reads
\beq \lb{2.10}
 \int_c^d \phi_\lambda(x)^2dx \le M( \phi_\lambda(c)^2+\phi_\lambda(d)^2).
\eeq

By the Harnack inequality, there is $N>0$ such that $\phi_\lambda(y)\le N \phi_\lambda(x)$ if $|x-y|\le 2M$.  Set $L\equiv 6MN^2$ and assume \eqref{2.8} is violated for some $c\le d-L$ (notice that $L$ depends only on $a_+,B_\pm,\zeta$ if $\mu_0-\lambda_0\ge\zeta>0$, because $\lambda_0\le a_+$ and $q_+\le 2\sqrt{a_+B_+}$).   Then there must be $x\in[c,d]$ such that $\phi_\lambda(x)\le N^{-1}\phi_\lambda(d)$ because otherwise
\[
 \int_c^d \phi_\lambda(x)^2dx \ge 6M\phi_\lambda(d)^2 > M( \phi_\lambda(c)^2+\phi_\lambda(d)^2),
\] 
contradicting \eqref{2.10}.  Let $y$ be the rightmost point such that $y<d$ and $\phi_\lambda(y)= N^{-1}\phi_\lambda(d)$, and $z$ the leftmost point such that $z>d$ and $\phi_\lambda(z)= N^{-1}\phi_\lambda(d)$.  Then $y\le d-2M$, $z\ge d+2M$, and $\phi_\lambda(x)\ge N^{-1}\phi_\lambda(d)$ for any $x\in[y,z]$.  But this contradicts \eqref{2.10} with $y,z$ in place of $c,d$, so \eqref{2.8} is proved and we are done.
\end{proof}

{\it Remark.}  The argument in (i) works even for $\mu_1=\lambda_1$, with $\alpha=1$ and $m=0$.  Then $w_\mu\equiv h_{g,1}(v_\mu)$ will again be a sub-solution of \eqref{1.9} but this time $w_\mu \not\le v_\mu$ so we cannot recover a solution between them.

\begin{proof}[Proof of Theorem \ref{T.1.2}]
From \eqref{1.11a} %\eqref{2.9} 
we know that $\lambda_0:L^\infty_{\rm loc}(\bbR)^3\to\bbR$ is lower semi-continuous, which together with measurability of $p:\Omega\to L^\infty_{\rm loc}(\bbR)^3$ means that $A_\zeta\equiv \{\omega\in\Omega\,|\, \lambda_0(\omega)>\zeta\}$ is a measurable set.  Obviously $\pi_y A_\zeta=A_\zeta$ for all $y\in\bbR$, so $\bbP(A_\zeta)\in\{0,1\}$ for each $\zeta\in\bbR$.  This means that $\lambda_0$ is almost constant on $\Omega$.  The same follows for $\lambda_1$, using its upper semi-continuity as a function on $L^\infty_{\rm loc}(\bbR)^3$, which follows from its definition.

Let us replace $\Omega$ by its full-measure subset on which $\lambda_0,\lambda_1$ are constant.  Next fix any $\lambda\in(\lambda_0,\lambda_1)$ and let $u_{\delta_\lambda}(\omega;t,x)$ be the corresponding random transition front.  The remark after the proof of Theorem \ref{T.1.1} shows that there is $L$ such that   \eqref{2.8} holds for any $\omega\in\Omega$ and $c\le d-L$.  Therefore also $L_\eps$ in that proof is uniform in $\omega$, which means that if $Y(\omega;t)$  is the rightmost point such that $e^{\lambda t}\phi_\lambda(\omega;Y(\omega;t))=\tfrac 12$ and $X(\omega;t)$ the rightmost point such that $u_{\delta_\lambda}(\omega; t, X(\omega;t))=\tfrac 12$, then $|X(\omega;t)-Y(\omega;t)|$ is uniformly bounded on $\Omega\times\bbR$.  Thus we only need to prove \eqref{1.15} for $Y$ in place of $X$.

Notice that if $r_\lambda(\omega)\equiv \phi_\lambda'(0)$, then $r_\lambda:\Omega\to\bbR$ is measurable because $p:\Omega\to p(\Omega)$ is measurable and $r_\lambda:p(\Omega)\to \bbR$ is continuous when $p(\Omega)$ is equipped with $L^\infty_{\rm loc}(\bbR)^3$-induced topology.  The latter follows from \eqref{2.8} and the fact that any solution of \eqref{1.12} with $\phi(0)=1$ and $\phi'(0)\neq r_\lambda(\omega)$  grows exponentially as $x\to\infty$ (by \eqref{2.8} applied to the solution $\psi_\lambda$ converging to 0 as $x\to-\infty$ and the fact that $\phi_\lambda,\psi_\lambda$ are a basis of the set of all solutions). 

Therefore $\phi_\lambda(\cdot;x)$ is measurable for any fixed $x$.  Since $\phi_\lambda(\pi_y\omega;\cdot)=\phi_\lambda(\omega;y)^{-1}\phi_\lambda(\omega;y+\cdot)$, we have $\phi_\lambda(\omega;y+x)=\phi_\lambda(\omega;y)\phi_\lambda(\pi_y\omega;x)$.  So from ergodicity of $\{\pi_y\}_{y\in\bbR}$ and Oseledec theorem it follows that for almost all $\omega\in\Omega$,
\[
\lim_{x\to \pm\infty} \frac 1x \ln \phi_\lambda(\omega; x) = -\tau_\pm
\]
for some $\tau_\pm\in\bbR$ (and $\tau_\pm>0$ by \eqref{2.8}).  Moreover, $\tau_+=\tau_-$.  Otherwise, % for $\eps\equiv \tfrac 12|\tau_+-\tau_-|>0$, 
there exists $\Omega'\subset\Omega$ and $M<\infty$ such that $\bbP(\Omega')>\tfrac 12$ and 
\[
\left |\frac 1{\pm M} \ln \phi_\lambda(\omega; {\pm M}) -\tau_\pm \right| < \frac{|\tau_+-\tau_-|}2
\]
for all $\omega\in\Omega'$. But then
\[
\left |\frac 1M \ln \phi_\lambda(\pi_{-M} \omega; M) -\tau_- \right| < \frac{|\tau_+-\tau_-|}2
\]
for all $\omega\in\Omega'$, so $\Omega'\cap\pi_{-M}\Omega'=\emptyset$, a contradiction with $\bbP(\pi_{-M}\Omega')=\bbP(\Omega')>\tfrac 12$.
Then $\tau_+=\tau_-$ and \eqref{2.8} give 
\[
\lim_{|t|\to\infty} \frac{Y(\omega; t)}t = \frac \lambda{\tau_\pm} \equiv c_\lambda
\]
and the result follows.
\end{proof}

%%%%%%%%%%%%%%%%%%%%%%%%%%%%%%%%%%%%%%%%%%%%%%
%\section{Proofs of Theorems \ref{T.1.3} and \ref{T.1.4}} \lb{S3}
%%%%%%%%%%%%%%%%%%%%%%%%%%%%%%%%%%%%%%%%%%%%%%

\begin{proof}[Proof of Theorem \ref{T.1.3}]
(i)  The proof of all the claims, with the exception of the last one, is identical to the proof of Theorem \ref{T.1.1}(i), with $\alpha<1$ from the statement of Theorem \ref{T.1.3}(i), and \eqref{2.6} replaced by 
\[
|\nabla v_\xi(t,x)|^2 = |\xi|^2 v_\xi(t,x)^2 \le\alpha \aaa  v_\xi(t,x)^2
\]
for all $|\xi|\le \sqrt{\alpha \aaa }$.

The last claim is an easy consequence of $u_\mu(t,x)v_\mu(t,x)^{-1} \to 1$ as $v_\mu(t,x)\to 0$ and of
\[
\left( \frac{|t|}\pi \right)^{d/2} v_\mu(t,2t\zeta) e^{(|\zeta|^2-\aaa )t} d\zeta \rightharpoonup d\mu(\zeta)
\]
as $t\to -\infty$.
The latter statement, similar to one in \cite{HN2}, follows from
 \[
\left( \frac{|t|}\pi \right)^{d/2} v_\mu(t,2t\zeta) e^{(|\zeta|^2-\aaa )t} = \int_Y  \left( \frac{|t|}\pi \right)^{d/2} e^{-|\xi-\zeta|^2|t|} d\mu(\xi)
 \]
 for $\zeta\in\bbR^d$ and $t<0$.  
 
(iii) If $u_\mu(t,x)\ge \eps$, then $v_\mu(t,x)\ge h^{(-1)}(\eps)$ with $h$ from (i).  Then there is a unit vector $\eta=\eta_{x,t}$ such that
\[
\int_{Y_{\eta,\tht}} e^{-\xi\cdot x+(|\xi|^2+\aaa )t} d\mu(\xi) \ge \frac \tht{2\pi} h^{(-1)}(\eps), 
\]
where
\[
Y_{\eta,\tht} \equiv \left\{\xi\in Y \,\bigg|\, \arccos \frac {-\eta\cdot\xi} {|\xi|} \le \frac\tht 2  \right\}.
\]
If now $\eta\cdot y|y|^{-1} \ge \tht$, then $\arccos(\eta\cdot y|y|^{-1}) \le \tfrac \pi 2-\tht$, and so $\arccos(-\xi\cdot y|y|^{-1}|\xi|^{-1}) \le \tfrac {\pi-\tht} 2$ for any $\xi\in Y_{\eta,\tht}$. Therefore
\[
v_\mu(t,x +y) \ge \int_{Y_{\eta,\tht}} e^{-\xi\cdot (x+y)+(|\xi|^2+\aaa )t} d\mu(\xi) \ge  \frac \tht{2\pi} h^{(-1)}(\eps)   |y| \dist(0,\supp(\mu))  \cos \frac {\pi-\tht} 2
\]
and the result follows from \eqref{1.14} with 
\[
L_{\eps,\tht}\equiv \left[\frac \tht{2\pi} h^{(-1)}(\eps)  \dist(0,\supp(\mu)) \cos \frac {\pi-\tht} 2 \right]^{-1} h^{(-1)}(1-\eps).
\]
 
(ii) Assume first that $0\in\ch(\mu)$ and $\nu(Y)^{-1}\int_{Y} \xi d\nu(\xi) = 0$ for some $0<\nu\le\mu$. Then
\[
v_\mu(t,x) \ge  \int_{Y} e^{-\xi\cdot x + \aaa(3-\sgn(t)) t/2} d\nu(\xi) \ge \nu(Y) e^{- \nu(Y)^{-1}\int_{Y} \xi d\nu(\xi)\cdot x} e^{\aaa(3-\sgn(t)) t/2} = \nu(Y) e^{\aaa(3-\sgn(t)) t/2}
\]
by Jensen's inequality.  This and \eqref{1.14} yield $\inf_{x\in\bbR^d} u_\mu(x,t) >0$ for each $t\in\bbR$.

Now assume that $0\notin\ch(\mu)$ and define $\hat\mu_d\equiv \mu$.  The second claim in \eqref{1.19a} follows from $\mu>0$ and (i) so let us prove the first claim.  Since $\ch(\mu)$ is a convex set, it must be contained in a closed half-space with 0 on its boundary.  Assume without loss it is $\bbR^{d-1}\times\bbR^+_0$, and let $\mu_d\equiv \hat\mu_d|_{\bbR^{d-1}\times\bbR^+}$ and $\hat \mu_{d-1}\equiv \hat\mu_d|_{\bbR^{d-1}\times\{0\}} = \hat\mu_d-\mu_d$.  Now $\ch(\mu)\cap (\bbR^{d-1}\times\{0\})$ must be contained in a closed half-space of $\bbR^{d-1}\times\{0\}$ with 0 on its boundary.  Assume without loss it is $\bbR^{d-2}\times\bbR^+_0\times\{0\}$, and let $\mu_{d-1}\equiv \hat\mu_{d-1}|_{\bbR^{d-2}\times\bbR^+\times\{0\}}$ and $\hat \mu_{d-2}\equiv \hat\mu_{d-1}-\mu_{d-1}$.  Continue in this way until obtaining $\mu_1 = \hat\mu_1$ supported in $\bbR^+\times\{0\}^{d-1}$ (because $\hat\mu_0=\mu|_{\{0\}}=0$).

Since $\mu=\mu_1+\dots+\mu_d$ and $u_\mu\le v_\mu$, it is sufficient to show that for  any $\eps>0$ there is $x\in\bbR^d$ such that for  $k=1,\dots,d$ we have
\beq \lb{2.11}
\int_{Y} e^{-\xi\cdot x} d\mu_k(\xi) \le \frac \eps d
\eeq
(the extra factor $e^{(|\xi|^2+\aaa )t}\le e^{\aaa(3+\sgn(t)) t/2}$ from the definition of $v_\mu$ can be absorbed in $\eps$). For $k=1$, the set of $x\in\bbR^d$ satisfying \eqref{2.11} contains some half-space $[\rho_1,\infty)\times\bbR^{d-1}$.  For each $k= 2,\dots ,d$ and any $r_k>0$, it contains $\bar B_{r_k}(0)\times[\rho_{k,r_k},\infty)\times\bbR^{d-k}$ for some $\rho_{k,r_k}>0$, where $\bar B_{r_k}(0)$ is the closed  ball in $\bbR^{k-1}$ with radius $r_k$ and center 0.  If we choose $r_2\ge \rho_1$ and then recursively $r_k\ge r_{k-1}+\rho_{k-1,r_{k-1}}$ for $k=3,\dots,d$, the corresponding $k$ sets 
%will have a non-empty intersection and so $x$ satisfying \eqref{2.11} exists.
all contain the point $x=(\rho_1,\rho_{2,r_2},\dots,\rho_{d,r_d})$.  So \eqref{2.11} holds for this $x$ and we are done.
\end{proof}

{\it Remark.}  We have $\ch(\mu)\subseteq\ches(\mu)$, the intersection of convex hulls of all essential supports of $\mu$.  This is because if  $A$ is an essential support of $\mu$ and $\ch(A)$ its convex hull, then $\bbE(\nu)=\nu(\bbR^d)^{-1}\int_A\xi d\nu(\xi)\in\ch(A)$ when $0<\nu\le\mu$.  The opposite inclusion follows from the construction at the end of the previous proof applied to any $\zeta\notin\ch(\mu)$ instead of 0. Indeed, for any such $\zeta$, one can again find open half-spaces $S_d,\dots,S_1$ of dimensions $d,\dots,1$ whose boundaries contain $\zeta$ (without loss these can be assumed to be $S_k=\zeta + \bbR^{k-1}\times\bbR^+\times\{0\}^{d-k} $) and measures $\mu_k$  on $S_k$ ($k=d,\dots,1$) such that $\mu=\mu_1+\dots+\mu_d$. Thus $S\equiv \bigcup_{k=1}^d S_k$ is an essential support of $\mu$ and $\zeta\notin S$, which yields $\ch(\mu)\supseteq\ches(\mu)$.

\begin{proof}[Proof of Theorem \ref{T.1.5}]
This is identical to the previous proof, using that \eqref{2.4f} yields \eqref{2.3} for $v_\xi$ when $\xi\in S_\alpha$, and thus also for $v_\mu$ because $v_\xi>0$.
%the fact that \eqref{2.3} holds with some $\alpha_\eps<1$ under the hypothesis of $(a,B,q)$ close to $(\bar a,I,0)$.  This is so because it holds for each $v_\xi$ with $|\xi|\le \beta_\eps$, since $(\tht_\xi,\kappa_\xi)=(1,|\xi|^2+\bar a)$ when $(a(x),B(x),q(x))=(\bar a,I,0)$.
\end{proof}

\medskip
%%%%%%%%%%%%%%%%%%%%%%%%%%%%%%%%%%%%%%%%%%%%%%%%%%%%%%%%%%%%%%%%%%
\appendix
\section*{Appendix.} \lb{SA}
\renewcommand{\theequation}{3.\arabic{equation}}
\renewcommand{\thetheorem}{3.\arabic{theorem}}
\setcounter{theorem}{0}
\setcounter{equation}{0}
%%%%%%%%%%%%%%%%%%%%%%%%%%%%%%%%%%%%%%%%%%%%%%%%%%%%%%%%%%%%%%%%%%

\begin{lemma} \lb{L.A.1}
Assume that $g\in C^1([0,1])$ satisfies \eqref{1.3} and $g'(u)\le 1$ for $u\in(0,1)$.  Let $U:\bbR\to(0,1)$ be a traveling front profile for \eqref{A.1}
corresponding to speed $\gamma+\gamma^{-1}\ge 2$ with $\gamma\in(0,1]$, that is, $U(-\infty)=1$, $U(\infty)=0$, $U'(x)<0$ for all $x\in\bbR$, and  $U$ satisfies
\[%beq \lb{A.2}
U''+(\gamma+\gamma^{-1})U'+g(U)=0
\]
on $\bbR$.  Then 
\[
0<-U'< \gamma g(U).
\]
 %for all $x\in\bbR$.
\end{lemma}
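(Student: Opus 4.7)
The plan is to set $V(x) := U'(x) + \gamma\,g(U(x))$ and to show $V > 0$ on $\bbR$, which is equivalent to $-U' < \gamma g(U)$; the other inequality $-U' > 0$ is just a restatement of the hypothesis $U' < 0$. The behavior at infinity is immediate: as $x \to \pm\infty$, both $U'(x) \to 0$ and $g(U(x)) \to g(1) = g(0) = 0$, so $V(\pm\infty) = 0$. Differentiating $V$ and inserting $U'' = -cU' - g(U)$ together with the identity $c\gamma = \gamma^2 + 1$ that follows from $c = \gamma + \gamma^{-1}$, a short computation yields
\[
V' \;=\; (\gamma g'(U) - c)\,V \;+\; \gamma^2\, g(U)\,(1 - g'(U)).
\]
The structural features I will exploit are: (a) the inhomogeneity $\gamma^2 g(U)(1 - g'(U))$ is non-negative, by $g > 0$ on $(0,1)$ and $g'(U) \le 1$; and (b) the coefficient $\gamma g'(U) - c \le \gamma - c = -\gamma^{-1} < 0$ is uniformly negative.

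For the non-strict bound $V \ge 0$ I would argue by contradiction: if $V$ attained a negative infimum at some $x_0 \in \bbR$, then $V'(x_0) = 0$, but the right-hand side above evaluated at $x_0$ would be the sum of a strictly positive term $V(x_0)(\gamma g'(U(x_0)) - c) > 0$ (product of two negative quantities) and a non-negative term, contradicting $V'(x_0) = 0$. To promote $V \ge 0$ to $V > 0$, I would introduce the integrating factor $\mu(x) := \exp\!\bigl(\int_0^x (c - \gamma g'(U(s)))\,ds\bigr) > 0$, so that $(\mu V)' = \mu\, \gamma^2 g(U)(1 - g'(U)) \ge 0$. Because $V$ is bounded on $\bbR$ and $\mu \to 0$ as $x \to -\infty$ (the integrand is $\ge \gamma^{-1} > 0$), we also have $\mu V \to 0$ at $-\infty$. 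If $V(x_0) = 0$ at some $x_0 \in \bbR$, then $\mu V$ is non-decreasing, starts at $0$ at $-\infty$, and vanishes again at $x_0$, forcing $V \equiv 0$ on $(-\infty, x_0]$.

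The last step is to derive a contradiction from $V \equiv 0$ on $(-\infty, x_0]$. There we have $U' \equiv -\gamma g(U)$; differentiating once gives $U'' = \gamma^2 g'(U) g(U)$, and plugging into the profile equation yields (again using $c\gamma = \gamma^2 + 1$) the identity $\gamma^2 g(U)(g'(U) - 1) \equiv 0$ on $(-\infty, x_0]$. Since $g(U) > 0$, this forces $g'(u) = 1$ for every $u$ in the range $[U(x_0), 1)$ traced by $U|_{(-\infty, x_0]}$, so by $g \in C^1$ and $g(1) = 0$ we get $g(u) = u - 1 < 0$ on $[U(x_0), 1)$, contradicting $g > 0$ on $(0,1)$. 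The main obstacle I anticipate is exactly this strict-positivity step: the weak inequality $V \ge 0$ is essentially a one-line maximum-principle argument once the ODE for $V$ is in hand, whereas ruling out $V(x_0) = 0$ requires both the integrating-factor/strong-maximum-principle flavored step and a genuine use of the structural constraints $g(1) = 0$ and $g > 0$ on $(0,1)$.
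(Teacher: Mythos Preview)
Your proof is correct. The underlying idea is the same as the paper's---both hinge on the sign of the quantity $U'+\gamma g(U)$---but the execution differs. The paper works in the phase plane $(U,U')$: it shows the region $D=\{(u,v):v\in(-\gamma g(u),0)\}$ is forward-invariant for the first-order system $(U',V')=(V,-cV-g(U))$ by checking that the vector field points inward on $\partial D$ (which is exactly your inequality $\gamma^2 g(U)(1-g'(U))\ge 0$), and then argues that if the trajectory were ever outside $D$, it would have to have been outside for all earlier times, forcing $V'>0$ on a left half-line and hence $V(x)>V(-\infty)=0$, a contradiction. You instead write down the scalar linear ODE $V'=(\gamma g'(U)-c)V+\gamma^2 g(U)(1-g'(U))$ for $V=U'+\gamma g(U)$ directly and apply a maximum-principle/integrating-factor argument. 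Your route makes the strict inequality step more transparent: the integrating-factor argument plus the observation that $g'\equiv 1$ near $u=1$ would force $g(u)=u-1<0$ cleanly rules out $V(x_0)=0$, whereas in the paper's proof strictness is obtained somewhat implicitly through the same final contradiction. The phase-plane picture, on the other hand, is more geometric and makes the role of the hypothesis $g'\le 1$ (as the inward-pointing condition on $\partial D$) visually immediate.
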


\begin{proof}
Let $V\equiv U'$ and consider the curve $\{(U(x),V(x))\}_{x\in\bbR}$ in $\bbR^2$.  It connects $(1,0)$ to $(0,0)$ and lies in the fourth quadrant $U>0>V$.  We need to show that it lies in the domain 
\[
D\equiv \{(u,v) \,|\, u\in(0,1) \text{ and } v\in(-\gamma g(u),0) \}.
\]
We have $(U',V')=(V,-\gamma V - \gamma^{-1} V -g(U))$ and the condition $g'\le 1$ ensures that the vector  $(v,-\gamma v - \gamma^{-1} v -g(u))$ points inside $D$ (or is parallel to $\partial D$) when $v=-\gamma g(u)$.  This means that $(U(y),V(y))\in D$ for all $y\ge x$ whenever $(U(x),V(x))\in D$.  Thus if $(U(x),V(x))\notin D$ for some $x\in\bbR$, then $(U(y),V(y))\notin D$ for all $y\le x$.  But then $V(y)\le -\gamma g(U(y))$ for $y\le x$, so $-\gamma V(y) - \gamma^{-1} V(y) -g(U(y))\ge -\gamma V(y)> 0$ for $y\le x$.  Since $V(-\infty)=0$, it follows that $V(x)> 0$, a contradiction.
\end{proof}

%%%%%%%%%%%%%%%%%%%%%%%%%%%%%%%%%%%%%%%%%%%%%%

%%%%%%%%%%%%%%%%%%%%%%%%%%%%%%%%%%%%%%%%%%%%%%

\end{document}